\newcommand{\C}{\mathbb{C}}
\newcommand{\R}{\mathbb{R}}
\newcommand{\N}{\mathbb{N}}
\newcommand{\Z}{\mathbb{Z}}
\newcommand{\mcf}{\mathcal{D}}
\newcommand{\fol}{\mathcal{F}}
\newcommand{\calp}{{\mathcal{P}}}
\def\picill#1by#2(#3)#4
\vfill\special{illustration #3 scaled #4}}}
\newtheorem{teo}{Theorem}[section]
\newtheorem{prop}[teo]{Proposition}
\newtheorem{lema}[teo]{Lemma}
\newtheorem{obs}[teo]{Remark}
\newtheorem{defnc}[teo]{Definition}
\date{\today}
\begin{document}

\title{Palais leaf-space manifolds and surfaces carrying holomorphic flows}

\author{Ana Cristina Ferreira, \, \, \, Julio C. Rebelo \, \, \, \& \, \, \, Helena Reis}
\address{}
\thanks{}

\subjclass[2010]{Primary 32S65; Secondary 37F75, 57S20}
\keywords{holomorphic local transformation groups, foliations and leaf spaces, holomorphic complete vector fields}

\begin{abstract}
Given a pair of commuting holomorphic vector fields defined on a neighborhood of $(0,0) \in \C^2$, we discuss the
problem of globalizing them as an action of $\C^2$ on a suitable complex surfaces along with some related questions.
A review of Palais' theory about globalization of local transformation groups is also included in our discussion.
\end{abstract}

\dedicatory{To Professor Yu.S. Il'yashenko, on the occasion of his ${\rm 75}^{\rm th}$ birthday}

\maketitle

\section{Introduction}

This paper is a first step towards a project about finding new {\it transcendent}\,
complete vector fields and/or new (transcendent) open complex surfaces (or, more generally, manifolds).
In this sense, this paper is vaguely related to the preprint \cite{FRR} though the results in \cite{FRR} have no
bear in the present discussion.

Roughly speaking, the project mentioned above is to a large extent concerned with an open complex surface
$M$ equipped either with a $\C$-action or with a $\C^2$-action having an open orbit, which of course constitutes a far more
symmetric case. The latter situation can equivalently be formulated by saying that the surface $M$ is
equipped with a pair of commuting (holomorphic) complete vector fields $X, \, Y$ that are linearly independent at
generic points. A first motivation to pay attention to vector fields as above is that, among them, there may exist
some new examples of dynamical behavior. Alternatively, their solutions might provide new (interesting)
transcendent functions in the spirit of the works of Painlev\'e and Chazy among others.

A first observation that explains our emphasis on the transcendent setting (e.g. open surfaces, non-algebraic vector fields)
stems from the main result in \cite{GuR}. Indeed, \cite{GuR} contains a rather detailed picture of the dynamics of a complete
(in fact, semicomplete) algebraic vector field on an algebraic surface. In particular, if genuinely new dynamical phenomena are
to exist among complete vector fields on surfaces, they can only be found in the ``transcendent context''.

It turns out, however, that dealing with transcendent vector fields and/or non-algebraic open surfaces is rather difficult
and the number of available tools is small compared to the corresponding situation in the algebraic setting.
For example, relatively little is known about (non-algebraic) ``entire complete vector fields'' already on $\C^2$
and this is in stark contrast with the results of \cite{GuR}. In this sense, it can be said that the
study of complete transcendent vector fields is wide open in terms of new possibilities.

Yet, to begin a systematic study of transcendent complete vector fields, we must not only face the
paucity of suitable tools but also the absence of a convenient starting point where to conduct a detailed - local
or semi-global - analysis. Again the situation contrasts with the
discussion in \cite{GuR} which started by focusing at the pole divisor of the vector field in question which provides an invariant
analytic curve for the underlying foliation. In the case of entire vector fields on $\C^2$, however, the existence of invariant
curves is far from evident. With all these issues taken into account,
the approach initiated in this paper is based on going
from {\it local}\, to {\it global}. More precisely, we will globalize a suitably chosen family of germs of
commuting vector fields on $(\C^2 ,0)$ into an action of $\C^2$ defined on a certain complex surface.

To be more accurate, consider the pairs $X$, $Y$ of commuting vector fields defined on a neighborhood of $(0,0) \in \C^2$ and
admitting the normal forms
\begin{eqnarray}
X & = &   x^a y^b\left[ mx \frac{\partial}{\partial x} - ny \frac{\partial}{\partial y} \right] \; \; \; {\rm and} \nonumber \\
Y & = & g(x^n y^m) \left[ x (-bm + x^a y^b f(x^ny^m)) \frac{\partial}{\partial x} + y \left(a m - \frac{n}{m} x^a y^b f(x^n y^m)\right)
\frac{\partial}{\partial y} \right] \label{ThePair-XY-Introduction}
\end{eqnarray}
in the same coordinates~$(x,y)$. Here $g$ is a holomorphic function whereas $f$ is allowed to be meromorphic. The order of the
pole of~$f$, however, is such that the map $(x,y) \rightarrow x^a y^b f(x^ny^m)$ is holomorphic with order at least~$1$ at the origin.
Finally $a,b,m,n$ are all positive integers satisfying $am - bn \in \{-1,1\}$.

If in addition the functions $g$ and $f$ satisfy one of the following conditions:
\begin{itemize}
  \item $g(0) \neq 0$ or
  \item $g(0) = g'' (0) = f_0 =0$ and $g'(0) \neq 0$, where $f_0$ stands for the constant term in the Laurent
  series of $f$ around $0 \in \C$.
\end{itemize}
Then the Lie algebra generated by $X$ and $Y$ is univalent; cf. Proposition~\ref{Maximal-X-Y-TheoremA}.
In this case we have:

\vspace{0.2cm}

\noindent {\bf Theorem A}. {\sl Every pair of vector fields $X$ and $Y$ as in~(\ref{ThePair-XY-Introduction})
that satisfies one of the above conditions can be realized by complete
vector fields on a suitable (Hausdorff) open complex surface.}

\vspace{0.2cm}

The reason to focus on the above family basically stems from the fact that, in several aspects,
this family is among the most interesting choices, as indicated by the discussion in \cite{FRR}
(even if no result of \cite{FRR} will be used in the present paper, as previously pointed out).

Another natural question related to Theorem~A concerns the strategy to try and globalize (germs of) holomorphic vector fields.
A general principle to turn a pair of germs of vector fields
as above into global complete ones stems from Palais' technique \cite{Palais} and his notion of ``maximal local transformation
groups''. This method deserves additional comments, and the reader is referred to Section~2 for definitions and terminology.
First note that we can move back and forth between local transformation groups and Lie algebras of vector fields, simply recall that
a local transformation group on an open set $U$ gives rise to a representation of the corresponding Lie algebra in the
space of vector fields on $U$. Conversely, every (finite dimensional) Lie algebra of vector fields on $U$ can be integrated
to yield a local transformation group on $M$. Now consider a Lie algebra of vector fields $\mathfrak{g}$ defined on some open set $U$.
Palais considers
the problem of embedding $U$ on some manifold $M$ so that the algebra $\mathfrak{g}$ can be extended to an algebra of vector fields
in all of $M$ which integrates to give rise to an action of the (``a'') corresponding Lie group. He observed that a necessary
condition for the existence of $M$ is that the Lie algebra $\mathfrak{g}$ on $U$ verifies a certain
condition that he calls {\it univaluedness} (i.e. the local transformation group arising from $\mathfrak{g}$ must be maximal).
Then he went on to show that, when this condition is satisfied, then $M$ exists {\it in the category of non-Hausdorff manifolds}.

An issue implicitly involved in Palais' monograph \cite{Palais} is that, in general, it may be hard to detect when a given (local)
Lie algebra is univalent in his sense. The special case in which $\mathfrak{g}$ is reduced to a single holomorphic
vector field is, however, of particular importance (the condition is meaningless for a single real vector field, cf. Section~2).
Holomorphic vector fields satisfying this condition were called {\it semicomplete}\, in
\cite{JR1}, i.e. a holomorphic vector field $X$ is semicomplete if the complex one-dimensional Lie algebra generated by~$X$ alone is univalent.
Dealing with a single vector field, we shall keep the terminology ``semicomplete'' and save the word {\it univalent}\, for Lie
algebras of dimension at least~$2$: this choice provides a convenient way to discuss Lie algebras and to refer to individual vector fields
in them without risking any misunderstanding.

Concerning holomorphic vector fields, significant progress has been made in the past 20 years in terms of having criteria
to detect whether
or not a vector field is semicomplete on an open set.
It was also observed that this condition can be exploited to provide insight into the nature of singular
points of complete holomorphic vector fields (see Section~2 for additional detail). Nonetheless, relatively little is known for more general
Lie algebras, bar the case of ${\rm PSL}\, (2, \C)$ \cite{guillotIHES}; see also \cite{guillot-toulouse} for some results on abelian
Lie algebras.

Now, we can summarize some issues involved in the family of commuting vector fields considered in Theorem~A.
First, we note that $X$ is clearly semicomplete as it follows from its explicit integration (its underlying foliation is linear).
Every holomorphic vector field $Y$ commuting with $X$ should then have the form indicated in~(\ref{ThePair-XY-Introduction})
for some functions $g$ and $f$ (cf. Section~2). At this point, there is no need to pay much attention to the exact condition on $g$ and $f$,
we may just assume that $Y$ is semicomplete as well (the connection between the former conditions and the semicomplete
character of $Y$ will be detailed in Section~3).
It is useful, however, to keep in mind that the underlying foliation of
a semicomplete vector field $Y$ as before must be linearizable, though not in the {\it same coordinate as the underlying foliation
of~$X$}.
From this, there follows that either $X$ or $Y$ can easily be globalized but the question of having a (simultaneous) globalization
of the corresponding local $\C^2$-action is less simple. Actually, in principle, it is by no means clear that the (abelian)
Lie algebra formed by $X$ and $Y$ is univalent in the sense of Palais, which itself is a necessary condition for the existence
of a simultaneous globalization (cf. Proposition~\ref{Maximal-X-Y-TheoremA}).
Other ``elementary'' or ``more visible'' {\it indications}\, that the simultaneous globalization problem is
slightly more subtle are also possible. For example, the vector field $Y$ is semicomplete only on a neighborhood of
the origin and not necessarily on, say, all of $\C^2$ (even if we assume that the functions $f$ and $g$ are holomorphic everywhere).
Also, it follows from the classification of compact surfaces carrying non-trivial holomorphic vector fields accomplished by
G. Dloussky, Oeljeklaus and Toma, cf. \cite{Dloussky-1}, \cite{Dloussky-2} that this Lie algebra
cannot be globalized on a {\it compact surface}.

In general, given two commuting vector fields each of them being semicomplete, it can still be hard to show that the corresponding
Lie algebra of dimension~$2$ is univalent. In particular, to conclude that the Lie algebra generated by $X$ and $Y$
as in~(\ref{ThePair-XY-Introduction}) is univalent is not immediate and it accounts for the content of
Proposition~\ref{Maximal-X-Y-TheoremA}. In turn, to prove Proposition~\ref{Maximal-X-Y-TheoremA},
it is convenient to state a useful general result.

Let $G$ be a Lie group which is given as the semidirect product of connected Lie groups
$H$ and $K$, i.e. $G = H \ltimes K$ so that $K$ is normal in $G$ (here $H$ and $K$ are naturally identified with subgroups
of $G$). The corresponding Lie algebras can be written as $\mathfrak{g} = \mathfrak{h} \ltimes \mathfrak{k}$, see Section~3 for details.
Finally, in Theorem~B below it is not specified whether the Lie groups and manifolds are complex or real since the statement applies equally
well to both smooth and holomorphic settings. All this said, we now have:

\vspace{0.2cm}

\noindent {\bf Theorem B}. {\sl Let $G = H \ltimes K$ ($G, H, K$ connected) and consider the Lie algebra decomposition
$\mathfrak{g} = \mathfrak{h} \ltimes \mathfrak{k}$. Consider also a representation $\rho : \mathfrak{g} \rightarrow \mathfrak{X} \, (M)$
of $\mathfrak{g}$ in the space $\mathfrak{X} \, (M)$ of vector fields of some manifold $M$ and
assume the following holds:
\begin{enumerate}
  \item The Lie algebra $\rho (\mathfrak{h}) \subset \mathfrak{X} \, (M)$ integrates to an action of $H$.
  \item The Lie algebra $\rho (\mathfrak{k}) \subset \mathfrak{X} \, (M)$ integrates to a maximal local action of $K$
  (i.e. $\rho (\mathfrak{k})$ is univalent).
\end{enumerate}
Then the Lie algebra $\rho (\mathfrak{g}) \subset \mathfrak{X} \, (M)$ integrates to a maximal local action of $G$
(i.e. $\rho (\mathfrak{g})$ is univalent).}

\vspace{0.2cm}

A. Guillot should be credited with Theorem~B. In fact, Proposition~2.2 in \cite{guillotFourier} represents very much
the content of Theorem~B, all the more so as he explicitly mentions that the proof works well beyond the particular case in question.
We have independently come to a similar conclusion in order to prove
the univalent character of the Lie algebra used in our Theorem~A. Indeed, our original statement - corresponding
to Corollary~C below - was tailored to
the particular case in hand for the present paper.

\vspace{0.2cm}

\noindent {\bf Corollary C}. {\sl Let $X$ and $Y$ be a pair of commuting vector fields defined on a complex manifold $M$.
\begin{itemize}
   \item[$( \imath )$] Assume that $X$ and $Y$ are holomorphic with $X$ $\C$-complete and $Y$ semicomplete on $M$. Then the Lie
   algebra formed by $X$ and $Y$ is univalent on $M$.

   \item[$( \imath \imath )$] Assume that $X$ and $Y$ are real vector fields which, in fact, correspond to the real and to the imaginary
   parts of a holomorphic vector field $Z$ on $M$. If $X$ is $\R$-complete, then $Z$ is semicomplete on $M$.
\end{itemize}
}

\vspace{0.2cm}

We are grateful to the referee who pointed out a similar statement in the work of A. Guillot \cite{guillotFourier} and suggested us
to include here the above general formulation. Theorem~B has a few rather non-trivial - if not always immediate - applications.
A version of Theorem~B was first used in \cite{guillotmathphysics}. Here, it will be used to prove Proposition~\ref{Maximal-X-Y-TheoremA}.
It should also be pointed out that
item~$( \imath \imath )$ in Corollary~C is a variant of a well known theorem by Forstneric
\cite{F.F} stating that a $\R$-complete holomorphic vector field on $\C^n$ is automatically $\C$-complete. Forstneric's result
actually holds for complex manifolds on which every negative plurisubharmonic function is constant, but it does not hold
for arbitrary complex manifolds. This contrasts with the statement in Corollary~C, valid for every complex manifold, which shows
that the condition of $\R$-completeness always has strong consequences on the holomorphic vector field $Z$. Another curious consequence
of Theorem~B is as follows. Complete holomorphic vector fields on an open complex manifold (for example on $\C^n$) do not constitute
a Lie algebra, as the sum of two complete vector fields may fail to be complete. Yet, owing to Theorem~B, a finite collection
of complete vector fields generating a Lie algebra bound by ``affine relations'' as expressed in Theorem~B will necessarily
consist of complete vector fields.

Let us finish this introduction by briefly outlining the structure of the paper. Section~\ref{morelikebasics} contains a
review of Palais' work in \cite{Palais} along with a couple of
worked out examples.

Section~3 contains the proof of Theorem~B from which we deduce the proof of Proposition~\ref{Maximal-X-Y-TheoremA}.
Owing to Proposition~\ref{Maximal-X-Y-TheoremA}, Palais' construction
allows us to integrate the abelian Lie algebra of Theorem~A to yield a global $\C^2$-action
on a complex surface $M$ that a priori is not Hausdorff. The discussion of Palais' leaf space for vector fields
$X$ and $Y$ as in~(\ref{ThePair-XY-Introduction}) will be
carried out in Section~4, the main result being that the corresponding manifolds are always Hausdorff. Theorem~A will then
follow from what precedes.

\section{Basic issues of local nature}\label{morelikebasics}

Mostly of the material reviewed in the sequel revolves around Palais' work \cite{Palais}.
To begin, consider a holomorphic vector field $X$ defined on a possibly open complex manifold $M$.
Recall from \cite{JR1} that $X$ is said to
be semicomplete on $M$ if for every point $p \in M$ there exists an integral curve of $X$, $\phi : V_p \subset \C \rightarrow M$,
satisfying the following conditions:
\begin{itemize}

\item[(A)]  $\phi (0) =p$ and $\phi' (T) = X (\phi (T))$ for every $T \in V_p$;

\item[(B)] Whenever $\{T_i \} \subset V_p \subseteq \C$ converges to a point $\hat{T}$ in the
boundary of $V_p$, the corresponding sequence $\phi (T_i)$ leaves every compact subset of $M$.
\end{itemize}
Owing to Condition~(B), the integral curve $\phi : V_p \subset \C \rightarrow M$
is a {\it maximal solution}\, of $X$ in a sense analogue to the notion of ``maximal solutions'' commonly
used for real differential equations.

A semicomplete vector field on $M$ gives rise to a semi-global flow $\Phi$ on $M$ (see \cite{JR1}) which fits
in the setting of {\it maximal local actions}\, as considered by Palais in \cite{Palais} and also discussed in
\cite{guillotFourier}. It is therefore convenient to briefly review the main results obtained in \cite{Palais}.
In view of the objectives of this work, the discussion will mostly be conducted in the complex (holomorphic) setting
though the reader will notice that it immediately carries over the differentiable category.
Let then
$G$ denote some complex Lie group whose identity element will be denoted by $0$
(since $G$ coincides with $\C^2$ in most of our applications). General elements of $G$ will be denoted by $g$,
unless we are explicitly discussing the cases where $G$ is $\C$ or $\C^2$ where these elements will be denoted by either
$t$ or $(t,s)$ (with the standard additive notation).

\begin{defnc}
\label{LocalTransformationGroup}
A local $G$-transformation group acting on the complex manifold $M$ (or a local $G$-action on $M$)
is a holomorphic map
$$
\Phi \, : \; \, \Omega \subset G \times M \longrightarrow M \; ,
$$
where $\Omega$ is a connected open set containing $\{ 0 \} \times M$, which satisfies the following conditions:
\begin{itemize}

\item[(1)] $\Phi (0, p) =p$ for every point $p \in M$.

\item[(2)] For every pair $g_1, g_2$ in $G$ and point $p \in M$, we have
$$
\Phi (g_1 g_2, p ) = \Phi (g_1 , \Phi (g_2, p))
$$
provided that both sides are defined.
\end{itemize}
\end{defnc}

The local $G$-action is called {\it global}\, if $\Omega = G \times M$. A global action of $\C$ is called a holomorphic
flow whereas a local action of $\C$ correspond to the standard notion of {\it local flow}.

It is also clear that a local $G$-action on $M$ gives rise to a representation $\rho$ of the Lie algebra of
$G$ in the space $\mathfrak{X} \, (M)$
of holomorphic vector fields on $M$. Conversely, any representation of the Lie algebra of $G$ in $\mathfrak{X} \, (M)$ can be
integrated to yield a local $G$-action.

A local $G$-transformation group $\Phi : \; \Omega \subset G \times M \rightarrow M$ allows us to naturally
associate an open set $V_p$ of $G$ to every point $p \in M$ by letting
\begin{equation}
V_p = \{ \; g \in G \; \; : \; \; \; (g,p) \in \Omega \; \} \, . \label{definitionV_p}
\end{equation}
Similarly, the orbit $\mathcal{O}_p \subset M$ of $p$ under $G$ is defined by
$$
\mathcal{O}_p = \{ \; \Phi (g,p) \; \; : \; \; \; (g,p) \in \Omega \; \} \, .
$$
In other words, $\mathcal{O}_p$ is the image of the above defined set $V_p \subset G$ by $\Phi$.

The reader will note that the partition of $M$ in orbits of $G$ naturally endows $M$ with a structure of singular foliation.

The fundamental notion discussed by Palais in \cite{Palais} is that of {\it maximal local action}\, and it can be formulated as follows.

\begin{defnc}
\label{MaximalTransformationGroup}
Consider a local $G$-transformation group $\Phi : \; \Omega \subset G \times M \rightarrow M$ and for every
point $p \in M$, let $V_p \subset G$ be as in~(\ref{definitionV_p}). The local
$G$-transformation group $\Phi : \; \Omega \subset G \times M \rightarrow M$ is said to be maximal
if it satisfies the following additional condition: for every $p\in M$ and for every
sequence $\{ g_i \} \subset V_p$ converging towards a point $\widehat{g}$ lying in the boundary of $V_p \subset G$,
the sequence $\Phi (g_i, p)$ leaves every compact subset of $M$.
\end{defnc}

Alternative formulations of Definition~\ref{MaximalTransformationGroup} can be obtained from Theorem~6 in
\cite{Palais} (pages 66-67). Note also that the definition reduces to that of semi-global flow in the case $G = \C$,
cf. \cite{JR1}.

%As made explicit in \cite{JR1} in the case of a single vector field,
%the restriction of a global $G$-action on $M$ to every open set $U$ of $M$ naturally induces a maximal
%local $G$-action on $U$. Along similar lines, a representation $\rho$ from the Lie algebra of $G$ in the
%space of holomorphic vector fields $\mathfrak{X} \, (M)$ on $M$ will be called {\it maximal (or semicomplete)}\,
%if it gives rise to a maximal local action of $G$ on $M$. The following simple lemma already appearing
%in \cite{FRR} further clarifies the issue for the case where $G = \C^2$ (here we freely identify $\C^2$ with
%its own Lie algebra).

Building on what precedes, let us now discuss the globalization problem according to \cite{Palais}.
First, to abridge notation, we assume from now on that every representation of a (finite dimensional) Lie algebra
in the Lie algebra $\mathfrak{X} \, (M)$ is faithful. In practice, we will directly deal with complex
Lie sub-algebras $\mathfrak{g}$ of $\mathfrak{X} \, (M)$. Recall that up to identifying $\mathfrak{g}$ with
the Lie algebra of a Lie group $G$, the vector fields in $\mathfrak{g}$ can always be integrated to
yield a local action of $G$ (local $G$-action) on $M$. In the special case where $M$ is compact, this procedure
actually yields a (global) action of $G$ (or a faithful action of a quotient of $G$). For this reason,
we can restrict our attention to the case where $M$ is an open manifold.

\begin{defnc}
\label{MaximalLiealgebras}
The Lie algebra $\mathfrak{g} \subset \mathfrak{X} \, (M)$ is said to be univalent if it can be integrated
to give rise to a maximal local transformation group $G$ on $M$.
\end{defnc}

As previously mentioned, in the case of a single (holomorphic) vector field $X$, we will simply say that $X$
is semicomplete, rather than saying that the Lie algebra of dimension~$1$ generated by $X$ is univalent. In this
way, the word ``univalent'' will be saved for Lie algebras of dimension at least~$2$.

The reader will note that univalent Lie algebras are stable by restriction: if $\mathfrak{g}$ is univalent on $M$ and
$U$ is an open set of $M$, then the restriction of $\mathfrak{g}$ to $U$ is naturally univalent as well (\cite{Palais}, \cite{JR1}).
In particular, we can talk about {\it germs} of univalent Lie algebras.

To make the discussion accurate, we begin with a couple of definitions slightly adapted from \cite{Palais}.

\begin{defnc}
\label{Globalization_definition}
Let $M$ be an open complex manifold and let $\mathfrak{g}$ be a finite dimensional complex Lie
(sub-) algebra of $\mathfrak{X} \, (M)$.
A globalization of the pair $(\mathfrak{g}, M)$
is a pair $(\overline{\mathfrak{g}}, \overline{M})$ satisfying all of the following conditions:
\begin{itemize}
\item[(1)] $\overline{M}$ is a complex manifold and $\overline{\mathfrak{g}}$ is a Lie (sub-) algebra of
$\mathfrak{X} \, (\overline{M})$ (which is isomorphic to $\mathfrak{g}$ as abstract Lie algebra).

\item[(2)] $\overline{\mathfrak{g}}$ gives rise to a (global) action $\Phi : G \times \overline{M}
\rightarrow \overline{M}$ of a certain Lie group $G$.

\item[(3)] There is a holomorphic diffeomorphism $\psi : M \rightarrow U$, where $U$ is some open set
of $\overline{M}$.

\item[(4)] The differential of $\psi$ sends $\mathfrak{g}$ to the restriction of $\overline{\mathfrak{g}}$
to $U$. In other words, pull-back by $\psi$ yields an one-to-one correspondence between the restriction to $U$ of vector fields
in $\overline{\mathfrak{g}}$ and vector fields defined on $M$ and belonging to $\mathfrak{g}$.

\item[(5)] For every point $p \in \overline{M}$, there exists $g \in G$ such that
$\Phi (g, p)$ lies in $U$.

If the pair $(\overline{\mathfrak{g}}, \overline{M})$ satisfies conditions~1--4 above but falls short from satisfying condition~5,
then we will say that $(\overline{\mathfrak{g}}, \overline{M})$ is a realization of the pair $(\mathfrak{g}, M)$.
\end{itemize}
\end{defnc}

\begin{obs}
\label{VariousComments-onDefinition}
{\rm Definition~\ref{Globalization_definition} needs a few additional comments. First it is clear that any globalization
of the pair $(\mathfrak{g}, M)$ is also a realization of $(\mathfrak{g}, M)$. This observation also admits a partial converse, namely
every realization of $(\mathfrak{g}, M)$ contains a globalization of $(\mathfrak{g}, M)$. Indeed consider a realization
$(\overline{\mathfrak{g}}, \overline{M})$ of $(\mathfrak{g}, M)$. Up to identifying (by $\psi$)
$M$ with an open set $U$ of $\overline{M}$, a globalization
for $(\mathfrak{g}, M)$ can be obtained by simply taking the saturated of $U$ under the action of $G$ on $\overline{M}$.
The interest of including condition~(5) in the notion of ``globalization'' will, however, become clear in the
course of our discussion.

Still concerning Definition~\ref{Globalization_definition} another issue worth mentioning has to do with identifications of
$\mathfrak{g}$ and $\overline{\mathfrak{g}}$. As mentioned, they are isomorphic as Lie algebras, indeed they are both isomorphic
to the Lie algebra of a certain Lie group $G$. Yet, Definition~\ref{Globalization_definition} assumes no identification
between the two algebras in question other than the one established by $\psi$. In this respect, the reader will note that
Definition~\ref{Globalization_definition} does not mention any a priori local $G$-action on $M$.

Naturally, if one such identification is chosen from the beginning, then we may also ask about the existence of an {\it equivariant}\,
globalization. Additional information in this direction can be found in Remark~\ref{anti-equivariant}. Note, however, that this type
of issue does not pertain the statement of our Theorem~A.}
\end{obs}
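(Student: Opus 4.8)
The plan is to exhibit, inside the given realization $(\overline{\mathfrak{g}}, \overline{M})$, a $G$-invariant open submanifold on which the inherited data automatically satisfies the extra condition~(5) of Definition~\ref{Globalization_definition}. The only natural candidate is the saturation of $U$ under the global action $\Phi$, and the argument amounts to checking that this candidate does the job.

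First I would set $\widehat{M} = \{ \Phi (g,q) \; : \; g \in G, \; q \in U \}$, the $G$-orbit of $U$. Since a realization comes equipped, by its defining condition~(2), with a genuinely \emph{global} action $\Phi : G \times \overline{M} \rightarrow \overline{M}$, each partial map $\Phi (g, \cdot) : \overline{M} \rightarrow \overline{M}$ is a biholomorphism with inverse $\Phi (g^{-1}, \cdot)$. Hence $\widehat{M} = \bigcup_{g \in G} \Phi (g, \cdot) (U)$ is a union of biholomorphic images of the open set $U$, so it is open in $\overline{M}$ and therefore a complex manifold in its own right. Its $G$-invariance follows at once from the group law $\Phi (h, \Phi (g, q)) = \Phi (hg, q)$.

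Next I would transport the remaining structure to $\widehat{M}$. Because $\widehat{M}$ is open and $G$-invariant, the vector fields of $\overline{\mathfrak{g}}$ that generate $\Phi$ are tangent to $\widehat{M}$ and restrict to a Lie subalgebra $\widehat{\mathfrak{g}} := \overline{\mathfrak{g}} |_{\widehat{M}} \subset \mathfrak{X} \, (\widehat{M})$, while $\Phi$ restricts to a global action $G \times \widehat{M} \rightarrow \widehat{M}$. Since $\psi$ already identifies $M$ with $U \subseteq \widehat{M}$, conditions~(3) and~(4) carry over verbatim; and because the differential of $\psi$ identifies $\mathfrak{g}$ with $\overline{\mathfrak{g}} |_U$, which in turn factors through $\widehat{\mathfrak{g}}$, the restriction $\overline{\mathfrak{g}} \rightarrow \widehat{\mathfrak{g}}$ is injective, so $\widehat{\mathfrak{g}} \cong \mathfrak{g}$ as abstract Lie algebras and condition~(1) holds. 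Finally condition~(5) is built in: any $p \in \widehat{M}$ has the form $p = \Phi (g, q)$ with $q \in U$, whence $\Phi (g^{-1}, p) = q \in U$. Thus $(\widehat{\mathfrak{g}}, \widehat{M})$ is a globalization of $(\mathfrak{g}, M)$ sitting inside $(\overline{\mathfrak{g}}, \overline{M})$.

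I do not expect a serious obstacle, since the assertion is nearly definitional once the saturation has been singled out; the genuine content is the bookkeeping above. The point most deserving of care is the role of the \emph{global} (as opposed to merely local) nature of $\Phi$ on $\overline{M}$: it is precisely this that makes each $\Phi (g, \cdot)$ a biholomorphism, and hence simultaneously guarantees the openness of $\widehat{M}$ and the well-definedness of the saturation entering condition~(5). A second, milder subtlety is confirming that restricting $\overline{\mathfrak{g}}$ to $\widehat{M}$ preserves the abstract isomorphism type; this follows from injectivity of restriction to the nonempty open set $\widehat{M}$ --- in the holomorphic category from the identity principle, and in any case from the already available embedding $\psi$.
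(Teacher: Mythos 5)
Your proposal is correct and follows exactly the paper's own route: the remark itself proves the claim by taking the saturation of $U$ under the global $G$-action, which is precisely your $\widehat{M}$. Your verification of conditions (1)--(5) for $(\widehat{\mathfrak{g}}, \widehat{M})$ simply spells out the bookkeeping the paper leaves implicit.
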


As previously mentioned, the univalent character of $\mathfrak{g}$ on $M$ is a necessary condition for the existence
of a globalization (and hence of a realization) of the pair $(\mathfrak{g}, M)$.

Note that conditions~(1) through~(5) in Definition~\ref{Globalization_definition} make sense even in
the context of non-Hausdorff manifolds used in \cite{Palais}. Thus, it is convenient to introduce a non-Hausdorff analogue
of Definition~\ref{Globalization_definition}.

\begin{defnc}
\label{nHausdorff_Globalization_definition}
Consider a pair $(\mathfrak{g}, M)$ as in Definition~\ref{Globalization_definition}. A
nH-globalization (resp. $nH$-realization) of $(\mathfrak{g}, M)$ is a pair $(\overline{\mathfrak{g}}, \overline{M})$,
where $\overline{M}$ is a possibly non-Hausdorff complex manifold, satisfying conditions~(1)--(5)
(resp. conditions~(1)--(4)) of Definition~\ref{Globalization_definition}.
\end{defnc}

As a matter of fact, one of the main applications of the study of germs of semicomplete vector fields
is precisely to provide insight
in the structure of singular points of complete vector fields. Namely, if a {\it germ of} vector field is not semicomplete
then it cannot appear as the singular point of a complete vector field on any manifold. Since a similar study and application
can be envisaged for more general Lie algebras,
it is interesting to adapt the content of Definition~\ref{Globalization_definition} to the case of germs.

Consider then the germ of a Lie algebra $\mathfrak{g}$ at a point $p$ of a manifold $M$. A pair $(\overline{\mathfrak{g}}, \overline{M})$
will be called a {\it realization}\, of the germ of $\mathfrak{g}$ at $p$ if the following holds:
\begin{itemize}
  \item There is an open set $U \subset M$ equipped with a Lie algebra of vector fields denoted by $\mathfrak{g}_U$
  which represents the germ of $\mathfrak{g}$ at $p$.

  \item $(\overline{\mathfrak{g}}, \overline{M})$ is a realization of the pair $(\mathfrak{g}_U, U)$.
\end{itemize}

The notion of globalization of a germ of vector fields is slightly more subtle. Consider again a germ $\mathfrak{g}$
of Lie algebra at a point $p$ in a manifold $M$.

\begin{defnc}
\label{Globalization_definition_GERMS}
The pair $(\overline{\mathfrak{g}}, \overline{M})$ is said to be the {\it globalization}\, of $\mathfrak{g}$ at $p$ if there
are a pair $(\mathfrak{g}_U, U)$ and a decreasing sequence of
open set $U_k \subset U$, $k \in \N$, such that:
\begin{itemize}
  \item The pair $(\mathfrak{g}_U, U)$ represents the germ of $\mathfrak{g}$ at $p$.
  \item $\bigcap_{k \in \N} U_k = \{ p \}$.
  \item For every $k \in \N$, the pair $(\overline{\mathfrak{g}}, \overline{M})$ is a globalization of the pair
  $(\mathfrak{g}_{U \vert_{U_k}}, U)$ where $\mathfrak{g}_{U \vert_{U_k}}$ stands for the restriction of
  $\mathfrak{g}_U$ to $U_k$.
\end{itemize}
\end{defnc}

\begin{obs}
{\rm Unlike the case in which a Lie algebra of vector fields on a given manifold $M$ is considered, in the
context of germs, it is not true in general that a realization of a germ $\mathfrak{g}$ necessarily contains a
{\it globalization} of $\mathfrak{g}$, cf. Remark~\ref{Palais_germification}.}
\end{obs}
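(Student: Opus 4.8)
The plan is to substantiate the Remark by exhibiting a single germ together with \emph{one} realization of it that contains no globalization, thereby showing that the passage from realization to globalization---automatic for a fixed pair $(\mathfrak{g}, M)$ by saturation, as in Remark~\ref{VariousComments-onDefinition}---genuinely fails in the germ setting. First I would isolate why the saturation argument breaks down: for a pair one saturates the \emph{fixed} open set $U$ and obtains a $G$-invariant open set $\widehat{U}$ on which condition~(5) of Definition~\ref{Globalization_definition} holds by construction; for a germ, however, Definition~\ref{Globalization_definition_GERMS} demands that the \emph{same} $(\overline{\mathfrak{g}}, \overline{M})$ globalize every member of a shrinking sequence $U_k$ with $\bigcap_k U_k = \{ p \}$. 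Unwinding condition~(5) for all $k$ shows that a globalization of the germ forces \emph{every} orbit of $\overline{M}$ to meet arbitrarily small neighborhoods of $\psi(p)$, i.e. to accumulate at $\psi(p)$. This is far stronger than saturating a single $U$, and it is exactly what an overly large realization can violate.

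Concretely, I would take $\overline{M} = \C^2$ and let $\mathfrak{g} = \langle X \rangle$ be the one-dimensional Lie algebra generated by the linear, hence $\C$-complete, vector field
\begin{equation*}
X = x \frac{\partial}{\partial x} - y \frac{\partial}{\partial y} ,
\end{equation*}
whose global flow is $\Phi(t, (x,y)) = (e^t x, e^{-t} y)$. Restricting $X$ to any ball $U$ about the origin gives a Lie algebra $\mathfrak{g}_U$ representing the germ of $X$ at $0$, and $(\mathfrak{g}, \C^2)$ satisfies conditions~(1)--(4) of Definition~\ref{Globalization_definition} with $\psi$ the inclusion $U \hookrightarrow \C^2$; since the action is already global, $(\mathfrak{g}, \C^2)$ is a \emph{realization} of the germ at $0$. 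The orbits are immediate to list: the fixed point $\{ 0 \}$, the two punctured axes, and the hyperbolas $\{ xy = c \}$ for $c \neq 0$.

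The decisive step is then to verify that $\C^2$ contains no globalization of this germ. A globalization contained in $\C^2$ would be an open, $G$-invariant set $W \subseteq \C^2$ containing a neighborhood of the origin and such that every orbit meeting $W$ accumulates at $0$. But a routine estimate (AM--GM applied to $|x|^2 + |c|^2/|x|^2$) shows the hyperbola $\{ xy = c \}$ stays at distance at least $\sqrt{2 |c|} > 0$ from the origin, so for $c \neq 0$ it never accumulates at $0$; hence the union of all orbits accumulating at the origin is exactly the pair of axes $\{ xy = 0 \}$, which is not open. Consequently any open $W \ni 0$ must contain a point with $xy \neq 0$, whose orbit fails condition~(5) for all sufficiently small $U_k$. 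This contradiction shows that the realization $(\mathfrak{g}, \C^2)$ contains no globalization of the germ, establishing the Remark.

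I would expect the only genuinely delicate point to be the bookkeeping in the first paragraph---correctly translating ``globalization of the germ'' (condition~(5) for every $U_k$ in a shrinking sequence) into the clean geometric statement ``every orbit accumulates at $p$''---since everything afterwards reduces to the elementary observation that the nonzero level sets of $xy$ avoid a fixed neighborhood of the origin. A secondary point to phrase carefully is that, in parallel with Remark~\ref{VariousComments-onDefinition}, ``contains a globalization'' is read as the existence of an invariant open subset carrying a globalizing structure; with that reading, the pair of axes being non-open is precisely the obstruction.
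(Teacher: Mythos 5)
Your example and the paper's are, at bottom, the same mechanism. The paper substantiates this remark through Remark~\ref{Palais_germification} and Proposition~\ref{prop_example}: the rotation field $4y\,\partial/\partial x - x\,\partial/\partial y$ restricted to an invariant elliptical neighborhood $U_1$ is a realization of the germ on which the flow is already global, and the decisive geometric fact is that all non-trivial orbits (ellipses there, your hyperbolas $\{xy=c\}$ here) stay at a definite positive distance from the fixed point, so that condition~(5) of Definition~\ref{Globalization_definition} cannot hold for arbitrarily small neighborhoods. Your version is holomorphic rather than real and bypasses the leaf-space discussion, but it is the same argument; your distance estimate and the description of the orbits are correct.

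There is, however, a genuine gap precisely at the step you call decisive: the claim that ``unwinding condition~(5) for all $k$'' forces every orbit to accumulate at $\psi(p)$. Definition~\ref{Globalization_definition_GERMS} only requires a \emph{decreasing} sequence of open sets $U_k \subset U$ with $\bigcap_k U_k = \{p\}$; it does not require the $U_k$ to be eventually contained in any prescribed ball around $p$. Concretely, in your example take $U = \C^2$, $\psi$ the inclusion, and $U_k = B(0,1/k) \cup \{(x,y) \in \C^2 : |x| > k\}$. This sequence is decreasing, its intersection is exactly $\{0\}$, and \emph{every} orbit meets \emph{every} $U_k$: the origin and the punctured $y$-axis meet $B(0,1/k)$, while the punctured $x$-axis and each hyperbola $\{xy = c\}$ contain the point $(2k, c/2k)$ lying in $\{|x|>k\}$. (A routine modification, adjoining $\{(x,y) : 0<|y|< e^{-k|x|}/k\}$, even makes the $U_k$ connected.) Hence conditions~(1)--(5) hold for all $k$, and under the literal definition $(\langle X\rangle, \C^2)$ \emph{is} a globalization of the germ --- the opposite of what you assert. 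Your deduction is valid only under the stronger reading in which the $U_k$ are required to form a fundamental system of neighborhoods of $p$ (equivalently, to have diameters tending to $0$). To be fair, the paper's own example needs the same stronger reading (shrinking angular sectors defeat it in exactly the same way), so this is arguably a defect of the stated definition rather than of your geometry; but as written, the translation of ``globalization of the germ'' into ``every orbit accumulates at $\psi(p)$'' is not a consequence of the definitions you invoke, and your proof needs to state and adopt the fundamental-system reading for the counterexample to go through.
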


We can now summarize Palais' construction of a (possibly non-Hausdorff)
globalization for every pair $(\mathfrak{g}, M)$ where $\mathfrak{g}$ is a Lie algebra of vector fields on $M$ having
finite dimension.
It will turn out that Palais globalization $(\overline{\mathfrak{g}}, \overline{M})$ of
$(\mathfrak{g}, M)$ arises as a leaf space for a suitable foliation. Thus, whereas this leaf space is endowed with
charts giving it a structure of ``complex manifold'', the topology of the underlying topological space may fail to
be Hausdorff.

Let then $(\mathfrak{g}, M)$ be as above, i.e. $\mathfrak{g}$ is a finite dimensional Lie algebra of (holomorphic)
vector fields on a complex manifold $M$. We assume in the sequel that $\mathfrak{g}$ is univalent on $M$.
As usual, the space of (holomorphic) vector fields on $M$ will be denoted by
$\mathfrak{X} \, (M)$.

The Lie algebra
$\mathfrak{g}$ yields by integration a local transformation group $G$ on $M$ whose (maximal) local action will be denoted
by $\Phi : \Omega \subset G \times M \rightarrow M$. In turn, $\mathfrak{g}$ can also be identified with the
Lie algebra of $G$ and hence with left-invariant vector fields on $G$. Thus, to every vector field
$X \in \mathfrak{g} \subset \mathfrak{X} \, (M)$, there corresponds a left-invariant vector field
$X_G \in \mathfrak{X} \, (G)$ obtained through the natural identification of the Lie algebra of $G$ with the space
$\mathfrak{X} \, (G)$ of left-invariant vector fields on $G$.

\begin{obs}
{\rm In connection with the second paragraph of Remark~\ref{VariousComments-onDefinition}, the reader will note that
we have just fixed an identification of $\mathfrak{g}$ with the Lie algebra of $G$. The local action of $G$ on $M$
was also mentioned so that we will be able to make sense of equivariance-related properties of the globalization to be
constructed; see Remark~\ref{anti-equivariant}.}
\end{obs}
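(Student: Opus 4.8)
The plan is to recognize at the outset that this assertion is a \emph{notational and conceptual clarification} rather than a statement admitting a proof in the usual sense; accordingly, the only thing to ``verify'' is that the two identifications it invokes are legitimate and unambiguous. The first assertion is that $\mathfrak{g}$ has been identified with the Lie algebra of $G$. To justify this, I would simply invoke the standing conventions fixed just above: we chose $G$ to be a (complex) Lie group whose abstract Lie algebra is isomorphic to $\mathfrak{g}$, and we agreed once and for all that the representation of $\mathfrak{g}$ in $\mathfrak{X} \, (M)$ is faithful. Faithfulness means that $\mathfrak{g} \subset \mathfrak{X} \, (M)$ is itself an abstract Lie algebra isomorphic to $\mathrm{Lie} \, (G)$, so the selection of such an isomorphism is exactly the ``fixed identification'' the remark refers to. There is nothing to prove here beyond noting that such a $G$ exists (one may take, for instance, the simply connected group integrating $\mathfrak{g}$) and that the identification, once chosen, carries each $X \in \mathfrak{g}$ to a well-defined left-invariant field $X_G$ on $G$, precisely as displayed in the preceding paragraph.

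The second assertion concerns the local action of $G$ on $M$. Here I would appeal to the Lie--Palais integration correspondence already recorded in the text: every finite-dimensional Lie algebra of vector fields on $M$ integrates to a local $G$-action $\Phi : \Omega \subset G \times M \rightarrow M$, with $\Omega$ a connected open neighbourhood of $\{ 0 \} \times M$, and, since we are assuming $\mathfrak{g}$ univalent, this local action is moreover maximal. The point of the remark is that, \emph{having fixed} the identification of the first part, this local action is now pinned down exactly (not merely up to an automorphism of $G$), so that one can speak meaningfully of how the globalization to be constructed transforms under $G$, that is, of equivariance. The verification amounts to a compatibility check: the infinitesimal generators of $\Phi$ recover precisely the fields of $\mathfrak{g}$ under the chosen identification, which is immediate from the fact that $\Phi$ was obtained by integrating $\mathfrak{g}$.

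If there is any genuine subtlety to flag, and hence the only candidate for a ``main obstacle'', it is the dependence of the downstream construction on the \emph{choice} of identification. The remark is precisely signalling that this choice, deliberately left unspecified in Definition~\ref{Globalization_definition} and commented upon in the second paragraph of Remark~\ref{VariousComments-onDefinition}, is now being made explicit so that equivariance-type questions become well posed. I would therefore close by observing that the content here is purely organizational: it records which data have been fixed, defers the actual equivariance discussion to Remark~\ref{anti-equivariant}, and introduces no assertion requiring an independent argument.
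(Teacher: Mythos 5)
Your reading is correct and matches the paper exactly: this remark carries no proof in the source --- it merely records that the identification of $\mathfrak{g}$ with the Lie algebra of $G$ (via the left-invariant fields $X_G$) and the maximal local action $\Phi$ were fixed in the immediately preceding paragraphs, so that the equivariance question resolved in Remark~\ref{anti-equivariant} is well posed. Your justification of the two identifications (faithfulness convention, Lie--Palais integration, maximality from univaluedness) is precisely the material the paper itself invokes, so there is nothing to correct.
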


Next, consider the manifold $N = G \times M$ along with the embedding $\psi : M \rightarrow N$ given by
the identification $M \simeq (0, M) \subset N$ (where $0$ stands for the neutral element of $G$).
In the sequel, we can think either in terms of Lie algebras or of transformation
groups. In terms of Lie algebras, there are two natural representations of $\mathfrak{g}$ in $\mathfrak{X} \, (N)$
(where $\mathfrak{X} \, (N)$ stands for the space of holomorphic vector fields on $N$).
Namely, we define $\theta_1 : \mathfrak{g} \rightarrow \mathfrak{X} \, (N)$ by letting
$$
\theta_1 \, (X) \, (g, p) = (X_G (g), 0) \in T_{(g,p)} N \simeq T_gG \times T_p M \, .
$$
Naturally the Lie (sub-) algebra $\theta_1 (\mathfrak{g}) \subset \mathfrak{X} \, (N)$ integrates to an action
of $G$ on $N$, namely the action of $G$ on itself. More precisely, the action in question is given by the map
$\Psi : G \times N = G \times G \times M \rightarrow N =G \times M$ defined by
letting $\Psi (g_1,g_2,p) = (g_1g_2, p) \in N = G \times M$.

The second natural representation arises from the map $\theta_2 : \mathfrak{g} \rightarrow \mathfrak{X} \, (N)$
assigning to a vector field $X \in \mathfrak{g}$ the vector field $\theta_2 \, (X) \in \mathfrak{X} \, (N)$
defined by
$$
\theta_2 \, (X) \, (g, p) = (X_G (g), X (p)) \in T_{(g,p)} N \simeq T_g G \times T_p M \, .
$$
Clearly, both $\theta_1$ and $\theta_2$ are isomorphisms of Lie algebras from $\mathfrak{g}$ to its respective images
$\theta_1 \, (\mathfrak{g})$ and $\theta_2 \, (\mathfrak{g})$.

Now, the algebra $\theta_2 \, (\mathfrak{g}) \subset
\mathfrak{X} \, (N)$ can be integrated to yield a local action of $\widetilde{\Phi}$ of
$G$ on $N$ whose orbits define the leaves of a foliation $\fol_N$ on $N$.
Furthermore, the local actions $\Phi$ and $\widetilde{\Phi}$ of $G$ on $M$ and of $G$
on $N$ are equivariant with respect to the natural projection $N = G \times M \rightarrow M$.
It also follows that $\theta_2 \, (\mathfrak{g})$ is univalent on $N$ since $\mathfrak{g}$ is univalent on $M$.

Next consider the leaf space $\overline{M}$ of $\fol_N$ equipped with the natural quotient topology
inherited from $N$. Whereas this topology fails in general to be Hausdorff, the leaf space $\overline{M}$ itself
possesses natural complex coordinates giving it the structure of a non-Hausdorff complex manifold. We also denote
by ${\rm Proj}$ the canonical projection of $N$ to $\overline{M}$.

Now, by construction, $\Psi$ preserves the foliation $\fol_N$ and hence
induces a (global) action of $G$ on
$\overline{M}$. Similarly, $\theta_1 \, (\mathfrak{g})$ projects onto a Lie algebra
$\overline{\mathfrak{g}}$ of vector fields on $\overline{M}$. Naturally
$\overline{\mathfrak{g}}$ is again isomorphic to $\mathfrak{g}$ and, indeed, the isomorphism is fixed from the beginning:
we denote by $\eta$ this isomorphism. Finally it is clear that
$\overline{\mathfrak{g}}$ integrates to the above mentioned action of $G$ on $\overline{M}$.

It is convenient to explicitly work out the above defined action of $G$ on $\overline{M}$. This will enable
us to compare the isomorphism $\eta$
between $\mathfrak{g}$ and $\overline{\mathfrak{g}}$ with the isomorphism arising from the embedding $\psi$ given by
$M \simeq (0,M) \subset N$.
In doing this, it will be shown not only
that $(\overline{\mathfrak{g}}, \overline{M})$ is a realization of the pair $(\mathfrak{g}, M)$ in the sense of Definition~\ref{Globalization_definition}
but also that $\psi$ is not equivariant with respect to the above mentioned actions of $G$.

For this, note first that $\fol_N$ is transverse to $M \simeq (M,0)$. In addition,
each leaf of $\fol_N$ intersects $M \simeq (0,M)$ at a single point (at most). The second assertion follows
from Theorem~6 in \cite{Palais} (pages 66-67) since $\theta_2 \, (\mathfrak{g})$ is known to be univalent on~$N$.
Therefore, the embedding $\psi$ of $M$ in $N$ ($M \simeq (0,M)$) naturally induces an embedding (diffeomorphism) from
$M$ to the leaf space $\overline{M}$ of $\fol_N$. In slightly more accurate terms, the composition ${\rm Proj} \circ \psi$
provides an embedding from $M$ to $\overline{M}$.

Finally, fix a point $p = (0,p) \in M \simeq (0,M) \subset N$ and denote by $\overline{p}$ the corresponding point in
$\overline{M}$. Given a vector field $X \in \mathfrak{g}$ and let us consider the action of its exponential
$g_t = \exp (tX) \subset G$ in $\overline{M}$, for $t$ small. The local flow of $\theta_1 (X)$ moves the point $(0,p)$
to the point $(g_t, p)$, i.e. $\Psi (g_t, (0,p)) = (g_t ,p)$. On the other hand,
the leaf $L_{(g_t,p)}$ of  $\fol_N$ through $(g_t, p)$ intersects $(0,M)$ at a point~$(0,q)$
associated with a point $\overline{q} \in \overline{M}$. Hence the action of $g_t$ on $\overline{M}$ takes $\overline{p}$
to $\overline{q}$. However, by construction, $(0,q)$ is such that $\widetilde{\Phi} (g_t, (0, q)) = (g_t ,p)$.
The equivariant nature of $\Phi$ and $\widetilde{\Phi}$ with respect to the projection $N = G \times M \rightarrow M$
then implies that $\Phi (g_t ,q) =p$. Summarizing, we have $\Phi (g_t ,q) =p$ whereas the action induced by $\Psi$ on
$\overline{M}$ is such that the element $g_t$ takes $\overline{p}$ to $\overline{q}$. Therefore we conclude that, with
respect to the diffeomorphism associated with the embedding ($\psi$) $M \simeq (0,M) \subset N$ takes the vector field $X$
to the vector field $-X$, with respect to the (previously fixed) isomorphism $\eta$ between $\mathfrak{g}$ and
$\overline{\mathfrak{g}}$. This completes Palais' construction of the realization of $(\mathfrak{g}, M)$.

\begin{obs}
\label{anti-equivariant}
{\rm Recalling that ${\rm Proj} \circ \psi$ provides an embedding of $M$ in $\overline{M}$, let
${\rm Proj} \circ \psi (M) = \overline{U} \subset \overline{M}$. The above
construction shows, in particular, that pulling-back by ${\rm Proj} \circ \psi$ establishes an one-to-one correspondence between vector fields
in $\mathfrak{g} \subset \mathfrak{X} \, (M)$ and the restriction to $\overline{U} \subset \overline{M}$ of the Lie algebra
$\overline{\mathfrak{g}}$. Since $\overline{\mathfrak{g}}$ actually gives rise to an action of $G$ on $\overline{M}$,
we have obtained a realization of the pair $(\mathfrak{g}, M)$ in the sense of Definition~\ref{Globalization_definition}.
Furthermore, it can easily be checked that the pair $(\overline{\mathfrak{g}}, \overline{M})$ is, in fact, a
{\it globalization}\, of $(\mathfrak{g}, M)$.

On a different notice, having fixed from the beginning an identification of $\mathfrak{g}$ with the Lie algebra of a group $G$ and
also considered the corresponding local action of $G$ on $M$, there is a sense to ask whether the embedding
${\rm Proj} \circ \psi$ is equivariant with respect to the action of $G$ on $M$ and the action of $G$ on $\overline{M}$
(these definition of these actions being directly related to the isomorphism $\eta$ between $\mathfrak{g}$ and
$\overline{\mathfrak{g}}$). The answer
is no, since ${\rm Proj} \circ \psi$ takes $X$ to $-X \in \overline{\mathfrak{g}}$. In this sense, Palais' construction
actually yields an anti-equivariant globalization of $(\mathfrak{g}, M)$.}
\end{obs}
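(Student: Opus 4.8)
The plan is to establish the two assertions of Remark~\ref{anti-equivariant} in turn: first that $(\overline{\mathfrak{g}}, \overline{M})$ is genuinely a globalization (not merely a realization) of $(\mathfrak{g}, M)$ with respect to the embedding $\iota = {\rm Proj} \circ \psi$, and then that this embedding fails to be equivariant, being instead anti-equivariant in the precise sense that it carries $X \in \mathfrak{g}$ to $-X \in \overline{\mathfrak{g}}$ under the previously fixed isomorphism $\eta$. The organizing idea throughout is to keep the two $G$-actions on $N = G \times M$ cleanly separated: the action $\Psi$ arising from $\theta_1$, which translates only the $G$-coordinate and descends to the global action of $G$ on $\overline{M}$, and the local action $\widetilde{\Phi}$ arising from $\theta_2$, whose orbits are by construction the leaves of $\fol_N$. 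Thus two points of $N$ have the same image under ${\rm Proj}$ precisely when they lie on a common $\widetilde{\Phi}$-orbit, and I will use this criterion repeatedly.

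For the globalization statement, conditions (1)--(4) of Definition~\ref{Globalization_definition} have essentially been verified in the construction preceding the remark: $\overline{\mathfrak{g}}$ is a Lie algebra of vector fields on $\overline{M}$ isomorphic to $\mathfrak{g}$ and integrating to an action of $G$, while the univalence of $\theta_2(\mathfrak{g})$ on $N$ (Theorem~6 of \cite{Palais}) forces each leaf of $\fol_N$ to meet $(0,M)$ in at most one point, so that $\iota$ is a holomorphic diffeomorphism onto its image $\overline{U}$ matching $\mathfrak{g}$ with the restriction $\overline{\mathfrak{g}}$ to $\overline{U}$. To upgrade this realization to a globalization I would verify condition~(5) directly: every point of $\overline{M}$ is the ${\rm Proj}$-image of some $(g,p) \in N$, and applying $\Psi(g^{-1}, \cdot)$ translates the $G$-coordinate to the neutral element, sending $(g,p)$ into $(0,M) = \psi(M)$; since $\Psi$ descends to the $G$-action on $\overline{M}$, this exhibits a group element carrying the given point into $\overline{U}$.

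For the anti-equivariance I would run the orbit-tracking computation sketched in the construction and extract the sign. Fix $X$, put $g_t = \exp(tX)$, and fix $p$ with image $\overline{p} = \iota(p)$. The $\Psi$-induced action of $g_t$ on $\overline{M}$ sends $\overline{p}$ to the class of $\Psi(g_t, (0,p)) = (g_t, p)$, i.e. to the leaf of $\fol_N$ through $(g_t,p)$; I then locate the unique intersection $(0,q)$ of that leaf with $(0,M)$, characterized by $\widetilde{\Phi}(g_t, (0,q)) = (g_t, p)$. Pushing this relation through the projection $N \to M$, along which $\Phi$ and $\widetilde{\Phi}$ are equivariant, yields $\Phi(g_t, q) = p$, equivalently $q = \Phi(g_t^{-1}, p)$. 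Consequently $g_t \cdot \overline{p} = \overline{q} = \iota(\Phi(g_t^{-1}, p))$, so that the flow of $\overline{X} = \eta(X)$ for time $t$ through $\iota(p)$ equals $\iota$ applied to the flow of $X$ for time $-t$ through $p$; differentiating at $t = 0$ gives $\iota_* X = -\overline{X}$, which is exactly the assertion that $\iota$ carries $X$ to $-X$ with respect to $\eta$.

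The main obstacle I anticipate is the bookkeeping in this last computation, specifically pinning down the inversion that produces the minus sign. The sign is forced by the fact that $\widetilde{\Phi}$ moves the $G$-coordinate and the $M$-coordinate simultaneously, so that sliding the first coordinate of $(g_t, p)$ back to the neutral element along a leaf necessarily transports the second coordinate by $g_t^{-1}$; it is precisely this simultaneity, contrasted with the action $\Psi$ that touches only the $G$-coordinate, that separates equivariance from anti-equivariance. I would therefore carry out the computation while being explicit about the left-translation conventions governing $\Psi$, $\widetilde{\Phi}$, and the induced action on $\overline{M}$, since an inadvertent switch between left and right translations is exactly what would spuriously cancel or double-invert the sign.
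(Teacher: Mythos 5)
Your proposal is correct and follows essentially the same route as the paper: the anti-equivariance is obtained exactly as in the construction preceding the remark, by tracking $\Psi(g_t,(0,p))=(g_t,p)$, locating the intersection $(0,q)$ of the leaf with $(0,M)$ via $\widetilde{\Phi}(g_t,(0,q))=(g_t,p)$, and using the projection-equivariance of $\Phi$ and $\widetilde{\Phi}$ to deduce $\Phi(g_t,q)=p$, hence $\iota_\ast X=-\eta(X)$. Your explicit verification of condition~(5) — translating the $G$-coordinate of any $(g,p)\in N$ back to the neutral element by $\Psi(g^{-1},\cdot)$ — is simply the routine check the paper leaves implicit with ``it can easily be checked,'' and it is carried out correctly.
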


Palais' construction possesses at least one additional property that is interesting. Namely, the globalization
is {\it universal}\, in a suitable sense. We shall not review this property here since it will not be used in the
remainder of the paper; the reader is referred to Theorem~9 (page 71)
of \cite{Palais}.

\begin{obs}
\label{Palais_germification}
{\rm Note that Palais' construction does not provide a globalization for germs of Lie algebras. Indeed,
consider a Lie algebra $\mathfrak{g}$ defined (and univalent) on an open set $U_1$ and denote by
$(\overline{\mathfrak{g}}, \overline{U}_1)$ the corresponding Palais globalization. Next, let
$(\overline{\mathfrak{g}}, \overline{U}_2)$ be the Palais globalization of the restriction of $\mathfrak{g}$
to another open set $U_2$ contained in $U_1$. Note that the above construction provides no natural embedding of
$\overline{U}_2$ in $\overline{U}_1$ so that the ``stability part'' in the definition of the globalization
of germs (Definition~\ref{Globalization_definition_GERMS}) is not fulfilled in general; cf. Proposition~\ref{prop_example}
below.}
\end{obs}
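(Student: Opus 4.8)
The assertion of this remark is a non-existence (negative) statement, so the natural way to establish it is to produce a single explicit counterexample; this is precisely the role of Proposition~\ref{prop_example}. The plan is therefore to reduce the remark to the verification that, for a suitable choice of univalent $\mathfrak{g}$ on $U_1$ and of a smaller open set $U_2 \subset U_1$, the two Palais leaf spaces $\overline{U}_1$ and $\overline{U}_2$ cannot be reconciled by a natural embedding, so that no single manifold $\overline{M}$ can serve simultaneously as a globalization of all the restrictions $\mathfrak{g}|_{U_k}$ demanded by Definition~\ref{Globalization_definition_GERMS}.

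First I would pin down the only candidate for a ``natural'' comparison map. Writing $N_1 = G \times U_1$ and $N_2 = G \times U_2$, the inclusion $\iota : N_2 \hookrightarrow N_1$ carries the foliation $\fol_{N_2}$, which is just the restriction of $\fol_{N_1}$, into the leaves of $\fol_{N_1}$: every leaf of the smaller foliation is contained in a unique leaf of the larger one. Passing to quotients, this yields a canonical continuous map $\overline{\iota} : \overline{U}_2 \rightarrow \overline{U}_1$, compatible with the two $G$-actions and with the embeddings of $U_2$. It is exactly this map that the germ-globalization definition would require to be an embedding enjoying the appropriate stability, and the point of the remark is that it need not be one.

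Next I would exhibit the failure of $\overline{\iota}$ to be an embedding. A leaf of $\fol_{N_1}$ is an orbit of the (maximal) local $G$-action $\widetilde{\Phi}$ on $N_1$, and its intersection with the smaller set $N_2$ may perfectly well be disconnected, each connected component being a distinct leaf of $\fol_{N_2}$. Whenever this occurs, several distinct points of $\overline{U}_2$ are sent by $\overline{\iota}$ to a single point of $\overline{U}_1$, so that $\overline{\iota}$ is not injective and hence not an embedding. I would then argue that this collapsing is precisely what breaks the ``stability part'' of Definition~\ref{Globalization_definition_GERMS}: the leaf space attached to the smaller domain carries strictly more points (and, in general, a genuinely different topology near those points) than the restriction to $\overline{U}_1$ of the leaf space attached to the larger domain, so $\overline{U}_1$ cannot play the role of the single $\overline{M}$ required there.

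The main obstacle, and the reason an explicit example is indispensable, is to guarantee that the disconnection phenomenon really takes place, i.e. to produce a $G$-orbit in $N_1$ that leaves $N_2$ and then genuinely re-enters it, so that its trace in the smaller set has at least two connected components. For a single semicomplete vector field (the case $G = \C$) this amounts to controlling the return behaviour of the flow to the smaller neighbourhood, and it is here that the concrete computation of Proposition~\ref{prop_example} is used: by choosing $\mathfrak{g}$, $U_1$ and $U_2$ so that a trajectory exits and later re-enters $U_2$, one forces $\overline{\iota}$ to identify two distinct points, thereby certifying that Palais' construction does not stabilize into a globalization of the germ.
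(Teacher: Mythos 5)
Your argument is correct in substance and rests on the same example the paper invokes, but it takes a genuinely different route to the conclusion. The paper's justification of the remark \emph{is} Proposition~\ref{prop_example}: for $X=4y\,\partial/\partial x-x\,\partial/\partial y$, whose orbits are the ellipses $x^2+4y^2=k$, Palais' construction applied to a flow-invariant elliptical disc returns the disc itself (Hausdorff), while applied to a round ball it returns a non-Hausdorff leaf space (the two arcs in which a boundary-crossing ellipse meets the ball yield inseparable leaves $L_+$ and $L_-$). The construction thus produces mutually non-homeomorphic outputs on arbitrarily small neighborhoods of the origin, and this incompatibility is how the paper sees that the stability clause of Definition~\ref{Globalization_definition_GERMS} fails for Palais' construction. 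You instead form the canonical map $\overline{\iota}:\overline{U}_2\rightarrow\overline{U}_1$ induced by the inclusion $N_2\subset N_1$ and show it is non-injective as soon as some leaf of $\fol_{N_1}$ meets $N_2$ in several components. Your route addresses the remark's literal claim (``no natural embedding'') more directly; the paper's route proves the stronger, independently interesting Hausdorff/non-Hausdorff dichotomy. Two details need fixing: the example is real, so the relevant group is $\R$, not $\C$; and in Proposition~\ref{prop_example} as stated the Hausdorff domain (the elliptical disc $U_1$) is \emph{contained in} the ball $U_2$, and for that nesting your map $\overline{\iota}$ is actually injective, since orbits lying in the invariant disc never leave it. Your mechanism requires the opposite configuration --- a small round ball inside a larger flow-invariant region --- which the same vector field supplies trivially, but which is not literally the statement of the proposition; you are borrowing its geometry rather than its conclusion.

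There is also one genuine overstatement. From the non-injectivity of $\overline{\iota}$ you conclude that ``$\overline{U}_1$ cannot play the role of the single $\overline{M}$'' demanded by Definition~\ref{Globalization_definition_GERMS}. That does not follow: Definition~\ref{Globalization_definition} permits an arbitrary embedding $\psi$, not only the canonical one, and in this very example the dilations $(x,y)\mapsto\lambda(x,y)$ preserve the linear field $X$, so composing the inclusion with a suitable dilation exhibits a fixed invariant elliptical disc as a bona fide globalization --- condition~(5) included --- of the restriction of $X$ to every sufficiently small ball or disc around the origin. Hence a germ globalization \emph{does} exist here; what fails, and what both the remark and your non-injectivity argument actually establish, is only that Palais' construction does not \emph{provide} one, because the natural comparison maps between the Palais leaf spaces of nested domains are not embeddings. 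If you scale your conclusion back to that statement, your proof of the remark is sound.
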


Let us close this section with a couple of worked out examples of Palais' construction.
Probably the simplest example involving holomorphic vector fields is provided by
the one-dimensional vector field $X = x^2 \partial /\partial x$ which is
regarded as defined on a neighborhood $U$ of $0 \in \C$. Clearly $X$ is also semicomplete
on $U$ since it is so on all of $\C$. Whereas it is an easy consequence of Riemann's theorem that
$\C P (1)$ is the only Riemann
surface on which $X$ can be extended to a complete holomorphic vector field, Palais' construction leads directly
to the same conclusion.

To check the preceding assertion, consider the vector field
\[
\overline{X} = \frac{\partial}{\partial t} + X = \frac{\partial}{\partial t} + x^2 \frac{\partial}{\partial x}
\]
defined on $\C \times U$, where $U$ is the previously fixed neighborhood of $0 \in \C$. Denoting by $\fol_{\overline{X}}$
the foliation associated with $\overline{X}$, the above assertion amounts to checking that the leaf space of
$\fol_{\overline{X}}$ can be identified with $\C P(1)$. For this, however, it suffices to observe that this leaf space
is realized by the map $\pi : \C \times U \rightarrow \C P(1)$ given by $\pi (t,x) = [x:tx+1]$.

A slightly more subtle example which illustrates the content of Remark~\ref{Palais_germification} is formulated
as Proposition~\ref{prop_example} below.

\begin{prop}\label{prop_example}
Consider the vector field $X$ defined on $\R^2$ by
\[
X = 4y \frac{\partial}{\partial x} - x \frac{\partial}{\partial y} \, .
\]
Then, for every open neighborhood $U$ of the origin of $\R^2$ there exist open sets $U_1$ and $U_2$, with $0 \in
U_1 \subseteq U$ and $0 \in U_2 \subseteq U$ such that Palais globalization of $X$ for the pair $(X,U_1)$ (resp. $(X,U_2)$)
is Hausdorff (resp. non-Hausdorff). Here $(X,U_1)$ (resp. $(X,U_2)$) stands for the restriction of $X$ to $U_1$ (resp. $U_2$).
\end{prop}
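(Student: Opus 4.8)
The plan is to integrate $X$ explicitly, read off its orbit structure, and then run Palais' construction — exactly as in the worked example $X=x^2\partx$ above — for two judiciously chosen restrictions of $X$. The linear field $X = 4y\,\partx - x\,\party$ has linear part with eigenvalues $\pm 2i$, so its flow $\phi_s$ is periodic of period $\pi$, the origin is a centre, and the remaining orbits are the ellipses $E_c=\{x^2+4y^2=c\}$, $c>0$ (note that $x^2+4y^2$ is a first integral). On $\R^2\setminus\{0\}$ I would pass to coordinates $(c,\theta)$, with $c=x^2+4y^2$ and $\theta$ the angular parameter on $E_c$, in which the flow reads $\theta\mapsto\theta-2s$. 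Following Section~\ref{morelikebasics}, Palais' globalization of a restriction $(X,U_i)$ is the leaf space of the foliation $\fol_{\overline{X}}$ generated by $\overline{X}=\ddt+X$ on $N_i=\R\times U_i$. A direct computation shows that, along the leaves of $\fol_{\overline{X}}$, both $c$ and the class $\phi=\theta+2t \pmod{2\pi}$ are constant (here $t$ denotes the coordinate on the $\R$-factor); these are the natural ``leaf coordinates''.

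For the Hausdorff case I would take $U_1=\{x^2+4y^2<\rho^2\}$ with $\rho$ small enough that $U_1\subseteq U$. Since $U_1$ is a union of complete orbits, $X$ is in fact complete on $U_1$, hence $\overline{X}$ is a complete nonvanishing field on $N_1$ whose flow defines a free $\R$-action. Because the $t$-coordinate is strictly monotone along each orbit, this action is proper and $\{0\}\times U_1$ is a global transversal met exactly once by every leaf. Therefore the leaf space is a Hausdorff manifold, diffeomorphic to $U_1$ itself — as one expects when $X$ is already complete.

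For the non-Hausdorff case I would take $U_2=U_1\setminus\{P\}$ with $P=(x_*,0)$ and $0<x_*<\rho$, so that $P$ lies on a single orbit $E_{c_*}$, $c_*=x_*^2$, while $U_2\subseteq U$ still contains the origin (so $U_2\subset U_1$, which also illustrates Remark~\ref{Palais_germification}). Removing $P$ amounts to deleting the transversal line $R=\R\times\{P\}$ from $N_1$. Every leaf with $c\ne c_*$ is untouched and remains complete, but each leaf of leaf-coordinate $(c_*,\phi)$ is cut by $R$ at the times $t\in \phi/2+\pi\Z$ into countably many segments $L_{\phi,k}$, $k\in\Z$. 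The key point is that two distinct segments, say $L_{\phi,0}$ and $L_{\phi,1}$, are \emph{non-separated} in the leaf space: the complete leaves $\ell_c$ of coordinate $(c,\phi)$ with $c\to c_*$ accumulate simultaneously on $L_{\phi,0}$ and on $L_{\phi,1}$, since on any compact piece of either segment the points of $\ell_c$ converge to the corresponding points of the segment. Hence every saturated neighbourhood of $L_{\phi,0}$ and every saturated neighbourhood of $L_{\phi,1}$ both contain $\ell_c$ for $c$ close to $c_*$, so they intersect, which produces the required non-Hausdorff leaf space.

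The main obstacle is this last step: rigorously verifying non-separation in the quotient topology, i.e.\ producing the accumulating family $\ell_c$ and checking that it enters every pair of saturated open neighbourhoods of the two segments. A secondary technical point is the properness of the $\R$-action in the $U_1$ case, needed to guarantee that the Hausdorff leaf space is genuinely a manifold. Both reduce to the elementary observation that the deleted set $R$ is transverse to $\fol_{\overline{X}}$, so that deleting it breaks the previously closed-up helical leaves while leaving the nearby ones intact.
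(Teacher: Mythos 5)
Your proof is correct, and it splits naturally against the paper's. The Hausdorff half is exactly the paper's argument: both take $U_1$ to be the flow-invariant elliptical disc $\{x^2+4y^2<\rho^2\}$, on which $X$ is already complete, so the leaf space of $\overline{X}=\partial/\partial t+X$ on $\R\times U_1$ is $U_1$ itself. Where you genuinely diverge is the choice of $U_2$: you puncture the invariant disc at a point $P$ of a closed orbit, whereas the paper takes $U_2=B(0,\varepsilon)$, the round disc, inside which the ellipse $x^2+4y^2=\varepsilon^2$ survives only as two disjoint arcs (it meets the boundary circle precisely at $(\pm\varepsilon,0)$). The non-separation mechanism is then the same in both proofs: a periodic orbit broken by the domain yields several incomplete pieces, each a distinct leaf of $\fol_{\overline{X}}$, while nearby complete ellipses yield single helical leaves accumulating on all pieces at once, so any saturated open neighbourhoods of two distinct pieces must share those nearby leaves. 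This is exactly the step you flag as the ``main obstacle,'' and it is carried out in the paper just as you sketch it: fix one point on each piece, and observe that the (connected) leaves through nearby points enter both neighbourhoods; your verification via convergence on compact pieces of the segments is a complete way to finish it. As for what each choice buys: your punctured $U_2$ makes the cut times explicit ($t\in\phi/2+\pi\Z$), produces countably many mutually non-separated leaves, and sits inside $U_1$, so it also feeds directly into Remark~\ref{Palais_germification}; the paper's round disc keeps both $U_1$ and $U_2$ convex and simply connected, showing that the non-Hausdorff phenomenon is not an artefact of the topology of the domain but only of how its boundary cuts the orbits. Both choices satisfy $0\in U_i\subseteq U$, so either one proves the proposition.
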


\begin{proof}
First note that $X$ is semicomplete on every open set of $\R^2$ since this is the case for every vector field giving rise
a local action of $\R$. Next, the function
$H(x,y) = x^2 + 4y^2$ is a first integral for $X$ so that the leaves of the foliation
$\fol_X$ associated with $X$ are ellipses given by the equation
\[
x^2 + 4y^2 = k \, ,
\]
for $k \in \R_0^+$.

Fix an open neighborhood $U$ of the origin of $\R^2$ and let $\varepsilon$ be a small positive real constant such
that $B(0,\varepsilon) \subseteq U$. Let
\[
U_1 = \{(x,y) \in \R^2 : \, x^2 + 4y^2 < \varepsilon^2\}
\]
and
\[
U_2 = \{(x,y) \in \R^2 : \, x^2 + y^2 < \varepsilon^2\} = B(0,\varepsilon) \, .
\]
Let then $N_1 = \R \times U_1$ and $N_2  = \R \times U_2$ and consider the vector field $\overline{X} = \frac{\partial}{\partial y} + X$
on $N_1$ and on $N_2$.

Since $U_1$ is totally invariant by the flow of $X$, the action of $\R$ arising from the (local) flow of $X$ is already global
in $U_1$. It then becomes clear that the leaf space associated with $\overline{X}$ on $\R \times U_1$ in Palais' construction
is $U_1$ itself: in particular it is Hausdorff.

The same does not happen for the open set $U_2$. In fact, the set of points of $U_2$ satisfying the equation $x^2 + 4y^2 =
\varepsilon^2$ has two distinct connected components and they should be viewed as two distinct leaves of $\fol_X$ on $U_2$.
In turn, for every $0 < \delta < \varepsilon$ arbitrarily small, the set of points of $U_2$ satisfying $x^2 + 4y^2 = \varepsilon^2
- \delta$ is a leaf of $\fol$ that is constituted by a unique connected component.

Let now $L_+$ and $L_-$ stand for the leaves of $\overline{X}$ through the points $(0,0,\varepsilon/2)$ and $(\pi/2,0,
-\varepsilon/2)$. The argument presented in the previous paragraph implies that $L_+$ and $L_-$ are distinct leaves of
the foliation $\fol_{\overline{X}}$ induced on $N_2$ by $\overline{X}$. They cannot, however, be separated by two open sets
in the space of leaves of $\fol_{\overline{X}}$. Indeed, for every fixed neighborhood of the point corresponding to $L_+$
in the mentioned leaf space, there exists $\delta_+ > 0$ such that the neighborhood in question contains the points
corresponding to the leaves of $\fol_{\overline{X}}$ through points of the form $(0,0,\varepsilon/2 - \delta)$ for
every $\delta \in (0, \delta_+)$.
Analogously, every neighborhood of the point in the leaf space of $\fol_{\overline{X}}$ corresponding to $L_-$
contains the points corresponding to the leaves of $\fol_{\overline{X}}$ through $(\pi/4,0,-\varepsilon/2 + \delta)$,
provided that $\delta$ is small enough.
However, the leaf of $\fol_{\overline{X}}$ through
the point $(0,0,\varepsilon/2 - \delta)$ coincides with the leaf through the point $(\pi/4,0,-\varepsilon/2 + \delta)$.
The proposition follows.
\end{proof}

\section{Theorem~B and some applications}\label{Section_examples}

This section contains the proof of Theorem~B as well as Proposition~\ref{Maximal-X-Y-TheoremA}
concerning the univalent character of the Lie algebra generated by the vector fields $X$ and $Y$ as in
Theorem~A.

As usual, we can think either in terms of groups or in terms of Lie algebras. For our purposes, however, it seems
more convenient to begin Lie algebras.

Consider a (finite dimensional) Lie algebra of vector fields $\mathfrak{g} \subset \mathfrak{X} \, (M)$, where
$\mathfrak{X} \, (M)$ stands for the space of vector fields on a manifold $M$. Note that we need not distinguish between
the smooth or the holomorphic settings in what follows (the corresponding Lie algebras being accordingly
considered over $\R$ or over $\C$). Assume we have two (sub-) Lie algebras $\mathfrak{h}$ and $\mathfrak{k}$ of
$\mathfrak{g}$ satisfying the following conditions:
\begin{enumerate}
  \item As vector space, we have $\mathfrak{g} = \mathfrak{h} \oplus \mathfrak{k}$. In other words, every vector field
  in $\mathfrak{g}$ admits a unique decomposition as the sum of a vector field lying in $\mathfrak{h}$ and a vector field
  lying in $\mathfrak{k}$.

  \item The lie algebra $\mathfrak{k}$ is an ideal of $\mathfrak{g}$. In other words, for every $X \in \mathfrak{g}$ and
  every $Y \in \mathfrak{k}$, the commutator $[X,Y]$ belongs to $\mathfrak{k}$.
\end{enumerate}
The existence of the Lie (sub-) algebras $\mathfrak{h}$ and $\mathfrak{k}$, gives $\mathfrak{g}$ the structure of a
semidirect product as follows. For $Z \in \mathfrak{h}$, note that the adjoint representation
${\rm ad}\, (X)$ stabilizes $\mathfrak{k}$ since the latter is an ideal of $\mathfrak{g}$ and $\mathfrak{h} \subset
\mathfrak{g}$. Thus, the adjoint representation provides a homomorphism $\sigma : \mathfrak{h} \rightarrow {\rm Der}\, (\mathfrak{k})$
where $\sigma (Z)$ is the derivation of $\mathfrak{k}$ given assigning the commutator $[Z,Y]$ to the vector field $Y \in
\mathfrak{k}$. In other words, for $Z \in \mathfrak{h}$ and $Y \in \mathfrak{k}$, we have
$[Z,Y] = [\sigma (Z)] (Y)$. The triplet $(\mathfrak{h}, \mathfrak{k}, \sigma)$ then yields a unique structure of Lie algebra
on the direct sum $\mathfrak{h} \oplus \mathfrak{k}$ which is compatible with the Lie algebra structures in $\mathfrak{h}$
and in $\mathfrak{k}$. This structure is the so-called semidirect product of $\mathfrak{h}$ and $\mathfrak{k}$ (with respect
to the homomorphism $\sigma : \mathfrak{h} \rightarrow {\rm Der}\, (\mathfrak{k})$). In the present case, it coincides
with the initial Lie algebra structure in $\mathfrak{g} = \mathfrak{h} \oplus \mathfrak{k}$. The preceding is then
summarized by writing $\mathfrak{g} = \mathfrak{h} \ltimes \mathfrak{k}$, i.e. $\mathfrak{g}$ is the semidirect
product of its sub-algebras $\mathfrak{h}$ and $\mathfrak{k}$.

Let us now assume that $\mathfrak{g} = \mathfrak{h} \ltimes \mathfrak{k}$ as above satisfies the conditions in Theorem~B, namely:
\begin{itemize}
  \item $\mathfrak{h} \subset \mathfrak{X} \, (M)$ integrates to an action of a Lie group $H$ on $M$.
  \item $\mathfrak{k} \subset \mathfrak{X} \, (M)$ integrates to a maximal local action of a Lie group $K$ on $M$.
  (i.e. $\mathfrak{k}$ is univalent).
\end{itemize}
In turn, the Lie algebra $\mathfrak{g}$ integrates to a local action of a group $G$.

Some comments about the corresponding Lie groups $H$, $K$, and $G$ are needed. The action of $H$ on $M$ will be denoted by
$\Psi$ ($\Psi : H \times M \rightarrow M$) while $\Phi$
($\Phi : \Omega \subset K \times M \rightarrow M$) will stand for the maximal local action of $K$ on $M$.

The group $H$ has also a natural action $\tau : H \times K \rightarrow K$ on the group $K$ which is given by
$\tau(h,k) = hkh^{-1}$. Indeed, every element $h$ of $H$ can be identified with a diffeomorphism of $M$ which coincides
with the exponential (i.e. the time-one map) of a certain vector field $Z_h \in \mathfrak{h}$. Analogously,
every element $k \in K$ can be identified with the time-one map arising from a vector field $Y_k \in \mathfrak{k}$.
This time-one map is not necessarily globally defined as transformation of $M$ but it constitutes a diffeomorphism from its domain
to its image.
Since $[Z_h , Y_k]$ belongs to the Lie algebra $\mathfrak{k}$, it follows from the so-called Hadamard lemma
(\cite{serre}) that $h$ preserves $\mathfrak{k}$: in slightly more concrete terms, the Lie algebra $\mathfrak{k}$ is
stable under pull-backs by $h$ viewed as the time-one map of $Z_h$ (where $h \in H$ is identified with the corresponding
transformation of $M$ which is a diffeomorphism from $M$ to $M$).

Up to identifying elements of $H$ and $K$ with the corresponding transformations of $M$, the preceding also implies that
the conjugation $h \circ k \circ h^{-1}$ of a transformation $k \in K$ by a transformation (global diffeomorphism of $M$)
$h \in H$ necessarily belongs to $K$ (i.e. it is a transformation of $M$ previously identified with some element of $K$).
This shows that $\tau : H \times K \rightarrow K$ is well defined.

On a similar note, every element (transformation of $M$)
$g \in G$ is the time-one map induced by a vector field of the form $X+Y = Y+X$ with
$X \in \mathfrak{h}$ and $Y \in \mathfrak{k}$. Therefore the standard formulas revolving around Campbell-Hausdorff
(\cite{serre}) show
that every element $g$ can be written as a composition $h \circ k$ for uniquely defined $h \in H$ and $k \in K$. This gives rise
to the isomorphism $G = H \ltimes K$ (cf. below). Similarly, the same formulas also show that $g$ can be decomposed
as $k^{\ast} \circ h$ for uniquely defined $k^{\ast} \in K$ and $h \in H$ (note that the element $h$ in $H$ remains the same whether
$g$ is decomposed as $h \circ k$ or as $k^{\ast} \circ h$).
This second form of representing elements in $G$ as a pair of elements
in $H$ and in $K$ leads to the isomorphism $G = K \rtimes H$ (which, in turn, provides an analogue decomposition of
$\mathfrak{g}$ as $\mathfrak{g} = \mathfrak{k} \rtimes \mathfrak{h}$).
In what follows, we shall write $g = h \circ k$ or
$g = hk$ (resp. $g = k^{\ast} \circ h$ or $g = k^{\ast} h$) accordingly to whether or not we want to emphasize that
$g$ should be identified with the corresponding transformation of $M$.

It is also worth pointing out that the identity
$g = h \circ k = k^{\ast} \circ h$ ensures, in particular, that the domain of definition
of $h \circ k$ coincides with the domain of definition of $k^{\ast} \circ h$ (and both domains coincide with the domain of definition of $g$).
The domain of definition of $h \circ k$ is nothing but the domain of definition of $k$ whereas the domain of definition
of $k^{\ast} \circ h$ is $h^{-1} (V_{k^{\ast}})$ where $V_{k^{\ast}} \subset M$ is the domain of definition of $k^{\ast}$.

We can now recover the Lie group $G$ obtained by integrating the Lie algebra $\mathfrak{g}$ as the semidirect product
of $H$ and $K$. The underlying manifold
of $G$ is simply the product $G = H \times K \simeq K \times H$ of the underlying manifolds of $H$ and $K$.
The product of $G$ as group is defined as
\begin{equation}
(h_1 , k_1) (h_2, k_2) = (h_1 h_2 , \tau (h_2^{-1}, k_1) k_2) = (h_1 h_2 , (h_2^{-1} k_1 h_2)k_1) \; ,
\label{ThesemidirectProduct-A}
\end{equation}
for $G = H \ltimes K$. For the decomposition $G =K \rtimes H$, the product becomes
\begin{equation}
(k^{\ast}_1,h_1) (k^{\ast}_2, h_2) = (k^{\ast}_1 (h_1 k^{\ast}_2h_1^{-1}), h_1 h_2) \, . \label{ThesemidirectProduct-B}
\end{equation}

\begin{proof}[Proof of Theorem~B]
The argument is basically already encoded in the above discussion. From a technical point of view, the
proof of Theorem~B amounts to constructing a maximal local action for $G$ on $M$. For this, we proceed
as follows.

Recall that $\Phi : \Omega \subset K \times M \rightarrow M$ is the maximal local action of $K$ on $M$.
For each $q \in M$, let $\Omega_q \subset K$ be the set defined by
$$
\Omega_q = \{ \, k \in K \; ; \; \; (k,q) \in \Omega \, \} \, .
$$
Now, we define the set $\mathcal{U} \subset G \times M = K \times H \times M$ ($G = K \rtimes H$) by
$$
\mathcal{U} = \{ \, (k^{\ast},h,q) \in K \times H \times M \; ; \; \; k^{\ast} \in \Omega_{h(q)} \, \} \, ,
$$
where the fact that $h$ is a globally defined diffeomorphism of $M$ was implicitly used. If
instead of the decomposition $g = k^{\ast} \circ h$, the decomposition $g = h \circ k$ is used, then there follows
that the domain of definition of $g = h \circ k$ coincides with the domain of definition of $k$. For reference, let us
state this observation as a claim:

\vspace{0.1cm}

\noindent {\it Claim}. A point $(\overline{k}^{\ast}, \overline{h}, p)$ belongs to
the boundary $\partial \mathcal{U}$ of $\mathcal{U}$ if and only if $\overline{k}^{\ast}$ belongs to
the boundary of $\Omega_{\overline{h}(q)} \subset K$. Similarly, for the decomposition $G = H \ltimes K$, the
point $(\overline{h}, \overline{k} ,p)$ lies in $\partial \mathcal{U}$ if and only if
$\overline{k}$ lies in the boundary $\partial \Omega_p$ of $\Omega_p$.\qed

On the other hand, there is a naturally defined map $\Upsilon : \mathcal{U} \rightarrow M$ whose
value on $(g,q) = (k^{\ast},h,q)$ is
$$
\Upsilon (g,q) = \Upsilon (k^{\ast},h,q) = k^{\ast} \circ h(q) = \Phi (k^{\ast}, \Psi (h,q)) \, .
$$
The proof of Theorem~B is reduced to checking that $\Upsilon$ is a maximal local action arising from $G = K \rtimes H$.
We first note that $\Upsilon$ is a local action of $G$ on $M$. Indeed, $\Upsilon$ clearly satisfies the first
condition in Definition~\ref{LocalTransformationGroup}. To see that $\Upsilon$ satisfies the second condition as well
just note that
\begin{eqnarray*}
\Upsilon (k^{\ast}_1,h_1, \Upsilon (k^{\ast}_2,h_2,q))) & = & k^{\ast}_1 \circ h_1 \circ k^{\ast}_2 \circ h_2 (q) \\
& =& k^{\ast}_1 \circ h_1 \circ k^{\ast}_2 \circ h_1^{-1} \circ h_1 \circ h_2 (q) \\
& = & \Upsilon (((k^{\ast}_1,h_1)(k^{\ast}_2,h_2)) ,q) \, .
\end{eqnarray*}
The reader will note that in the above sequence of identities, there is no issue with the domains of definitions
of the corresponding transformations since $h, h^{-1}$ are globally defined on~$M$. There follows that
$\Upsilon$ is a local action of $G$ on $M$. To end the proof of the theorem, it remains to check that $\Upsilon$
satisfies the condition in Definition~\ref{MaximalTransformationGroup}.

Fix a point $p \in M$ and let $\{ g_i = (k^{\ast}_i ,h_i ) \}$ be a sequence of elements in $G$
such that $\{ (k^{\ast}_i ,h_i , p) \} \subset \mathcal{U}$ and $\{ (k^{\ast}_i ,h_i , p) \}$ converges to
a point in the boundary $\partial \mathcal{U}$ of $\mathcal{U} \subset G \times M \simeq K \rtimes H \times M$.
The limit point in $\partial \mathcal{U}$ of the mentioned sequence will be denoted by
$(\overline{k}^{\ast} , \overline{h} ,p)$. With this notation, we have to show that the sequence
$k^{\ast}_i \circ h_i (p)$ leaves every compact set in $M$. Assume aiming at a contradiction that this is not
the case. Thus, up to passing to a subsequence, we can assume that $\{ k^{\ast}_i \circ h_i (p) \}$ converges
to a point $\overline{p} = \overline{k}^{\ast} \circ \overline{h} (p)$ which lies
in some compact part $\overline{V} \subset M$.

It follows that $\overline{h}^{-1} (\overline{p}) = \overline{h}^{-1} \circ \overline{k}^{\ast} \circ \overline{h} (p)$
lies in a compact part $\overline{h}^{-1} (\overline{V})$ of $M$. Moreover, we have
$$
\overline{h}^{-1} (\overline{p}) = \lim_{i\rightarrow \infty} h_i^{-1} \circ k^{\ast}_i \circ h_i (p) =
\lim_{i\rightarrow \infty} k_i (p) \, ,
$$
where $k_i$ is such that $h_i \circ k_i = k^{\ast}_i \circ h_i$. Clearly $k_i$ converges to the point
$\overline{k} = \overline{h}^{-1} \circ \overline{k}^{\ast} \circ \overline{h}$. To finish
the proof of the theorem, it suffices to see that $\overline{k}$ lies in the boundary of $\Omega_p$.
This follows from the above claim since $(\overline{h} , \overline{k}) \simeq (\overline{k}^{\ast},
\overline{h})$ lies in $\partial \mathcal{U}$. Theorem~B is proved.
\end{proof}

Corollary~C is pretty much an immediate consequence of Theorem~B.

\begin{proof}[Proof of Corollary C]
The first assertion in Corollary~C is nothing but a particular case of Theorem~B. To show that the second assertion
is also a particular case of Theorem~B, consider the corresponding setting where $Z$ is a holomorphic vector field
on a complex manifold $M$. Recall that $X$ and $Y$ denote the real vector fields on $M$ which are respectively induced by the real (local)
flow of $Z$ and by the purely imaginary (local) flow of $Z$. Thus $X$ and $Y$ are real vector fields on $M$ whose local
flows are constituted by holomorphic maps. More importantly, $X$ and $Y$ clearly commute. Finally, they are both semicomplete
since every real vector field is automatically semicomplete, i.e. their integral curves always admit a maximal interval of definition
in $\R$. Thus if we assume that one of the vector fields $X$ or $Y$ is actually $\R$-complete, the situation in question
becomes again a particular case of Theorem~B for real Lie groups and in the smooth category. Hence there follows that
the local $\R^2$-action arising from the (real)
flows of $X$ and $Y$ is, indeed, maximal. However, this $\R^2$-action is by construction identified
with the (local) $\C$-flow of $Z$. In other words, the complex local flow of $Z$ has a maximal domain of
definition (i.e. it is semi-global) and hence
$Z$ must be semicomplete. This ends the proof of Corollary~C.
\end{proof}

To close this section let us state and prove Proposition~\ref{Maximal-X-Y-TheoremA}. Consider a pair of holomorphic
vector fields $X$ and $Y$ as in~(\ref{ThePair-XY-Introduction}) which are defined on a neighborhood of the origin
in $\C^2$. It is immediate to check that the vector field $X$ is semicomplete.
As to the vector field $Y$, its form is such that $[X,Y]=0$. The semicomplete character of $Y$, and more generally,
the univalent character of the Lie algebra generated by $X$ and $Y$ on a neighborhood of $(0,0) \in \C^2$ depends,
however, of the precise form of the functions $g$ and $f$.

Recall that $g$ is holomorphic whereas $f$ is allowed to be meromorphic. Nonetheless the order of the pole
of $f$ is such that the map $(x,y) \rightarrow x^a y^b f(x^ny^m)$ is holomorphic with order at least~$2$ at the origin.
Hence the linear part at the origin of the foliation associated with $Y$ is non-zero and given by
$$
-bm x \partial /\partial x + am y \partial /\partial y \, .
$$
The eigenvalues of this foliation at the origin are therefore $-b$ and $a$. We also remind the reader that
$a,b,m,n$ are all positive integers satisfying $am - bn \in \{-1,1\}$.

Finally, considering the Laurent expansion of $f$ around $0 \in \C$, let $f_0$ denote the constant term.
Now, Proposition~\ref{Maximal-X-Y-TheoremA}
reads as follows:

\begin{prop}
\label{Maximal-X-Y-TheoremA}
Let $X$ and $Y$ be as above. Assume that one of the following conditions hold:
\begin{enumerate}
  \item $g(0) \neq 0$.
  \item $g(0) = g''(0) = f_0 =0$ but  $g'(0) \neq 0$.
\end{enumerate}
Then the Lie algebra generated by $X$ and $Y$ as in~(\ref{ThePair-XY-Introduction}) is univalent
on a sufficiently small neighborhood $U$ of $(0,0) \in \C^2$.
\end{prop}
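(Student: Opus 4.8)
The plan is to deduce univalence from Theorem~B, applied in the form of Corollary~C. Since $[X,Y]=0$, the algebra $\mathfrak{g}=\langle X,Y\rangle$ is abelian of dimension~$2$; thus it is enough to exhibit a commuting pair consisting of one $\C$-complete and one semicomplete vector field that generates the same local $\C^2$-action as $X$ and $Y$. The immediate obstruction is that no nonzero element of $\mathfrak{g}$ is $\C$-complete on a small neighborhood $U\subset\C^2$: each such vector field has an isolated singularity at the origin (a saddle of linear part $-bm\,x\,\partial/\partial x+am\,y\,\partial/\partial y$ when $g(0)\neq 0$, and a more degenerate zero otherwise), so its flow leaves $U$ in finite time. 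My way around this is to change the ambient model and exploit the first integral of $X$.

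First I would record the structural computation in the multiplicative coordinates $u=x^{n}y^{m}$ and $v=x^{a}y^{b}$. Because $am-bn=\epsilon\in\{-1,1\}$, the exponent matrix has determinant $\pm 1$, so $(u,v)$ is a biholomorphism off the coordinate axes, and there one finds $X=\epsilon\,v^{2}\,\partial/\partial v$ — so that $u$ is a first integral of $X$ — together with $Y=\epsilon m\,g(u)\,u\,\partial/\partial u+(\epsilon/m)\,g(u)f(u)\,v^{2}\,\partial/\partial v$. Subtracting from $Y$ the vector field $(1/m)g(u)f(u)\,X$, whose coefficient is holomorphic in $(x,y)$ since $x^{a}y^{b}f(x^{n}y^{m})$ is, yields the commuting field $\tilde Y=\epsilon m\,g(u)\,u\,\partial/\partial u$. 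In these coordinates $X$ and $\tilde Y$ separate variables: $X$ acts on the $v$-factor and $\tilde Y$ on the $u$-factor, so $\langle X,\tilde Y\rangle$ is a genuine product.

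Next I would check that $\tilde Y$ is semicomplete on a $u$-disc, which is precisely where the two hypotheses enter. When $g(0)\neq 0$ the field $\epsilon m\,g(u)\,u\,\partial/\partial u$ has a simple zero, hence is holomorphically linearizable and semicomplete. When $g(0)=0$ and $g'(0)\neq 0$ it has a double zero, $h(u)=\epsilon m\,g(u)u=\epsilon m\,g'(0)\,u^{2}+\cdots$, and the vanishing $g''(0)=0$ removes the cubic term, making the residue of $du/h(u)$ at $0$ equal to zero; this kills the logarithmic obstruction and leaves the semicomplete model $u^{2}\,\partial/\partial u$, while $f_{0}=0$ plays the analogous role for the $v$-coupling carried by $Y$. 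With both $X=\epsilon\,v^{2}\,\partial/\partial v$ and $\tilde Y$ semicomplete, I would then invoke Theorem~B: compactifying the $v$-fibres to $\C P(1)$ turns $X$ into a genuinely $\C$-complete field (the standard complete extension of $v^{2}\,\partial/\partial v$, which is regular at $v=\infty$), while $\tilde Y$ stays semicomplete and commutes with it. Corollary~C then gives that $\langle X,\tilde Y\rangle$ is univalent on $D_{u}\times\C P(1)$, and univalence is stable under restriction to $D_{u}\times D_{v}$.

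The main obstacle is the final transfer of univalence back to $\langle X,Y\rangle$ on a genuine neighborhood $U$ of the origin in $\C^{2}$, and it splits into two issues. First, $\langle X,Y\rangle$ and $\langle X,\tilde Y\rangle$ define the same foliation and differ only by the shear $Y=\tilde Y+(1/m)g(u)f(u)\,X$; concretely $\phi^{Y}_{\sigma}=\phi^{X}_{A(\sigma)}\circ\phi^{\tilde Y}_{\sigma}$ with $A(\sigma)=\int_{0}^{\sigma}(1/m)g(u)f(u)$ evaluated along the $\tilde Y$-orbit, so the two local $\C^{2}$-actions have identical orbits and their domains of definition are intertwined by a fibre-preserving shear, from which the maximality condition for one should be readable off the other. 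Second, the monomial map $(x,y)\mapsto(u,v)$ is a branched covering that degenerates exactly along $\{u=0\}$, that is along the coordinate axes and at the origin, which is also where the shear coefficient $g(u)f(u)$ acquires its pole and where the origin sits as the common fixed point of the action. I expect the crux of the whole argument to be the verification of the boundary-escape condition at points of the axes and at the origin directly from the explicit normal forms, thereby closing the gap between the clean product picture valid on $(\C^{*})^{2}$ and the univalence required on all of $U$.
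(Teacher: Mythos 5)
Your normal form in the monomial coordinates $u=x^ny^m$, $v=x^ay^b$ is correct ($X=\epsilon v^2\partial/\partial v$, $Y=\epsilon m\,ug(u)\,\partial/\partial u+(\epsilon/m)g(u)f(u)\,v^2\partial/\partial v$), and your projected field $\epsilon m\,ug(u)\,\partial/\partial u$ is exactly the field $Z=wg(w)\,\partial/\partial w$ that the paper studies on the base of its fibration; the reduction to Theorem~B/Corollary~C is also the paper's reduction. But as written the argument has two genuine gaps, both of which you flag without closing. First, replacing $Y$ by $\tilde Y=Y-\frac{1}{m}g(u)f(u)X$ changes the Lie algebra: the coefficient $\frac{1}{m}g(u)f(u)$ is not a constant, so ${\rm span}_{\C}\{X,\tilde Y\}\neq{\rm span}_{\C}\{X,Y\}$, and univalence of $\langle X,\tilde Y\rangle$ does not formally yield univalence of $\langle X,Y\rangle$. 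The transfer through the shear $\phi^Y_s=\phi^X_{A(s)}\circ\phi^{\tilde Y}_s$ requires $A(s)=\int_0^s\frac{1}{m}g(u(\sigma))f(u(\sigma))\,d\sigma$ to be finite and \emph{single-valued} on the whole domain $V_p$ of the $\tilde Y$-flow. In case~(2) that domain is the exterior of a disc (the flow domain of a field conjugate to $u^2\partial/\partial u$), hence not simply connected, and the period of $A$ around the puncture is a nonzero multiple of $f_0$ (compute it with $d\sigma=du/(\epsilon m\,ug(u))$, giving residue $f_0/(\epsilon m^2)$). This single-valuedness is precisely where the hypothesis $f_0=0$ must be invoked — it is the analogue of the paper's Lemma asserting that the holonomy of the dicritical curve is the identity iff $f_0=0$ — and saying that $f_0$ ``plays the analogous role for the $v$-coupling'' is not a proof of this step.

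Second, and more seriously, your model lives on $(\C^{*})^{2}$, where $(u,v)$ is a genuine coordinate system, while the statement concerns a full neighborhood $U$ of the origin; you explicitly defer ``the verification of the boundary-escape condition at points of the axes and at the origin'' and correctly call it the crux. It is: univalence is a condition on every orbit, including those inside $\{xy=0\}$, and escape from $U$ can occur through the axes, where both the product picture and the shear coefficient $g(u)f(u)$ (which has a pole at $u=0$) give no information. The paper avoids this by never leaving an honest manifold: it globalizes $X$ by blowing up $\C P(2)$ until the linear foliation becomes a rational fibration $\mathcal{P}:M_{\varepsilon}\to D$ whose total space contains the bidisc $B_{\varepsilon}$ (axes and origin included) and on which $X$ is complete; it then extends $Y$ meromorphically to $M_{\varepsilon}$ with poles only on the curve $D$ at infinity, and proves semicompleteness of $Y$ directly via the time-form identity $\int_c dT_L=\int_{\mathcal{P}(c)}dw/(wg(w))$, the leaves inside the singular fiber (the axes) being harmless because $Y$ restricted there is linear. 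Until you either build an analogous compactified model containing the axes or verify the escape condition there by hand, the proposal remains a plan rather than a proof.
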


The proof of Proposition~\ref{Maximal-X-Y-TheoremA} relies on Theorem~B and on the fact that $X$ can easily be extended
to a complete vector field on a suitable rational fibration $M$.

To begin with, consider the vector field $X$ whose associated foliation $\fol_X$ is linear in the present coordinates.
Since the vector field $X$ is polynomial (actually a homogeneous multiple of a linear vector field) the foliation $\fol_X$
can be viewed as a global foliation on $\C^2$ and, indeed, on all of $\C P(2)$. The vector field $X$ is also a holomorphic
semicomplete vector field on $\C^2$, albeit its extension to $\C P(2)$ is meromorphic with the pole divisor coinciding with
the line at infinity $\Delta$.

Consider then the foliation $\fol_X$ as a global foliation defined on $\C P(2)$ and note that $\fol_X$ has exactly
$3$ singular points, namely: the origin of $\C^2$, the point $p_x$ corresponding to the intersection of the $x$-axis
with $\Delta$, and the point $p_y$ corresponding to the intersection of the $y$-axis with $\Delta$. Furthermore $\Delta$
is invariant by $\fol_X$.

The singular point $p_0$ is in the Siegel domain, while $p_x$ and $p_y$ are dicritical singular points
(with non-zero eigenvalues) for $\fol_X$. Consider the blow-up of $\C P(2)$ at $p_x$ and denote
by $\Delta_{1,x}$ the resulting component of the exceptional divisor. The blown-up foliation
$\fol_X$ (which will still be denoted by $\fol_X$) has now two singular points in $\Delta_{1,x}$.
These two singular points are determined by the intersections of $\Delta_{1,x}$ with the transforms
of $\Delta$ and of the $x$-axis. One of these singular points is (linearizable and) belongs to the Siegel domain
while the other is again dicritical with non-zero eigenvalues. We then repeat the procedure by blowing-up the
{\it dicritical singular point}. The structure keeps repeating itself until we obtain a divisor ($-1$ rational curve) $\Delta_{x}$
which {\it is not invariant} by the transformed foliation. The reader will note that this non-invariant (also called ``dicritical'')
$-1$ rational curve is everywhere transverse to the corresponding transform of the foliation $\fol_X$.

An analogous sequence of blowing-ups is also performed starting at the point $p_y$. Once dicritical singular points
no longer exist, we obtain a rational fibration $\mathcal{P}$ on the corresponding manifold $N$ (obtained by means of the indicated sequence of
blow-ups of $\C P(2)$). The corresponding non-invariant divisor ($-1$ rational curve) will similarly be denoted by
$\Delta_y$. The fibers of this rational fibration can naturally be identified with the leaves of $\fol_X$.
In particular $X$ is complete on $N$ since the fibers are compact. Indeed, the transform of $X$ on $N$ is such that $X$ vanishes
with order~$2$ over one of the curves $\Delta_{x}$, $\Delta_{y}$ while it is regular non-zero over the other. The flow of $X$
on a chosen fiber is therefore conjugate to the ``parabolic'' flow of $x^2 \partial /\partial x$ on $\C P(1)$.
Without loss of generality, the curve where $X$ vanishes with order~$2$ can be assumed to coincide with $\Delta_x$.

The fiber of $\mathcal{P}$ passing through the origin of $\C^2$ (naturally identified with a point in $N$)  is singular:
it consists of a sequence of rational curves containing the (initial) cartesian axes $\{ y=0\}$ and $\{ x=0\}$.
Next, consider
a small bidisc $B_{\varepsilon}$ of radii $\varepsilon >0$ around $(0,0) \in \C^2$ and denote by
$M_{\varepsilon} \subset N$ the saturated of $B_{\varepsilon}$ by the fibers of the above constructed rational fibration.
If $\varepsilon >0$ is small enough, then $X$ is holomorphic on $M_{\varepsilon}$. Also the restriction of $X$ to
$M_{\varepsilon}$ is still complete on $M_{\varepsilon}$.

Up to reducing $\varepsilon >0$, we can assume that $Y$ is defined on $B_{\varepsilon}$. In turn,
$B_{\varepsilon}$ is identified with its image on $M_{\varepsilon}$. Similarly, the fibration $\mathcal{P}$ can be restricted to
$M_{\varepsilon}$.

Finally it should be pointed out that the fibers of $\mathcal{P}$ intersect $B_{\varepsilon}$ in connected sets. Indeed, the flow
of $X$ being ``parabolic'' on each fiber of $\mathcal{P}$, $M_{\varepsilon}$ is essentially a globalization
of $X$ on $B_{\varepsilon}$: we say ``essentially'' because the actual globalization with be the complement in
$M_{\varepsilon}$ of the rational curve $\Delta_{x}$ (recall that $X$ vanishes with order~$2$ on $\Delta_x$).

Setting $D = \Delta_x \cap M_{\varepsilon}$, there follows that the integral curves of $X$ (fibers of $\mathcal{P}$)
intersect $D$ at a single point. Thus we can use $D \subset \Delta_x$ as basis for $\mathcal{P}$ (notation:
$\mathcal{P} : M_{\varepsilon} \rightarrow D$).

\begin{lema}
\label{Tired-1}
The vector field $Y$ admits a meromorphic (possibly holomorphic)
extension to $M_{\varepsilon}$ with poles contained in $D = \Delta_x \cap M_{\varepsilon}$. Moreover $D$
is invariant by the foliation associated with $Y$.
\end{lema}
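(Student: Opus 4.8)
The plan is to extend $Y$ by transporting it along the fibers of $\mathcal{P}$ by means of the complete flow of $X$, using the commutation relation $[X,Y]=0$, and then to analyze the extended field near $D$ through a direct local computation. The flow argument will yield holomorphy away from $D$, while the local analysis will simultaneously produce the meromorphic extension across $D$ and the tangency of $D$ to $\fol_Y$.

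First I would exploit that $X$ is complete on $M_{\varepsilon}$ together with $[X,Y]=0$. Writing $\{\phi_X^t\}$ for the flow of $X$, the relation $[X,Y]=0$ gives $(\phi_X^t)_\ast Y = Y$ wherever both sides are defined. By construction every fiber of $\mathcal{P}$ contained in $M_{\varepsilon}$ meets $B_{\varepsilon}$, and since the flow of $X$ on each smooth fiber is conjugate to the parabolic flow of $x^2 \partial/\partial x$ on $\C P(1)$ -- whose unique fixed point corresponds to the point of the fiber lying on $\Delta_x$ -- every point of such a fiber other than its $\Delta_x$-point belongs to the $X$-orbit of a point of $B_{\varepsilon}$. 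Hence, for $q \in M_{\varepsilon} \setminus \Delta_x$ lying on a smooth fiber, I set $\widetilde{Y}(q) = (\phi_X^{-t})_\ast Y(q')$ with $q' = \phi_X^{t}(q) \in B_{\varepsilon}$. The invariance $(\phi_X^t)_\ast Y = Y$ makes this independent of the chosen $t$, and the fact that each fiber-orbit is biholomorphic to $\C$ (hence simply connected) guarantees single-valuedness; this produces a holomorphic extension $\widetilde{Y}$ of $Y$ over the union of the smooth fibers minus $\Delta_x$.

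Next I would settle the behavior near $D$. Around a generic point of $D$ I choose coordinates $(z,w)$ in which $D = \{z=0\}$, the fibers of $\mathcal{P}$ are $\{w = \textrm{const}\}$, and $X = z^2\, \partial/\partial z$, reflecting that $X$ vanishes to order $2$ on $\Delta_x$ and is parabolic along the fibers. Writing the already-constructed field as $\widetilde{Y} = A(z,w)\,\partial/\partial z + B(z,w)\,\partial/\partial w$, holomorphic for $z \neq 0$, the relation $[X,\widetilde{Y}]=0$ becomes on $\{z \neq 0\}$ the pair of equations $z^2 B_z = 0$ and $z^2 A_z - 2zA = 0$. The first forces $B = B(w)$, the second forces $A = z^2 C(w)$; both are then holomorphic across $\{z=0\}$. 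This shows at once that $\widetilde{Y}$ extends holomorphically across the generic part of $D$ and that its $\partial/\partial z$-component vanishes on $\{z=0\}$, i.e. $\widetilde{Y}$ is tangent to $D$ there. Since the fibers of $\mathcal{P}$ meet $D$ in isolated points and $D$ is irreducible, tangency on a dense open subset of $D$ propagates to all of $D$ by continuity, giving the asserted invariance of $D$ under $\fol_Y$.

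Finally I would assemble the global extension. The locus not yet covered consists of the $X$-invariant (singular) fibers -- in particular the reducible fiber $F_0$ through the origin, whose axial components carry $X \equiv 0$ -- together with the non-generic points of $D$; all of this is contained in $D$ union a finite union of fibers, hence in a proper analytic subset of $M_{\varepsilon}$. On $M_{\varepsilon} \setminus D$ the two descriptions of $\widetilde{Y}$ -- by the flow argument off the singular fibers, and by direct restriction of $Y$ on the part of $F_0$ lying inside $B_{\varepsilon}$ -- agree by $X$-invariance, so $\widetilde{Y}$ is holomorphic there; near $D$ the local model exhibits it as (holomorphic, hence) meromorphic. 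A Levi-type removable-singularity statement for meromorphic sections of the tangent bundle then yields a meromorphic extension to all of $M_{\varepsilon}$ whose polar set is contained in $D$. \emph{The main obstacle} is precisely this last step of controlling $\widetilde{Y}$ along the singular fiber $F_0$, where $X$ vanishes identically on some components and the flow no longer reaches $B_{\varepsilon}$: here one cannot invoke the flow argument and must instead combine the honest holomorphy of $Y$ on the full neighborhood $B_{\varepsilon}$ of the origin with the local commutation analysis at the singular points of $F_0$ to rule out essential singularities and confine the poles to $D$.
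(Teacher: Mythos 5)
Your first two steps track the paper closely and are sound: the extension of $Y$ to $M_{\varepsilon}\setminus\Delta_x$ by transport along the complete flow of $X$ (using $[X,Y]=0$ and the connectedness of the intersections of the fibers of $\mathcal{P}$ with $B_{\varepsilon}$) is exactly the paper's argument, and your treatment of the \emph{generic} points of $D$ -- normalizing $X$ to $z^2\,\partial/\partial z$ fiberwise and solving $[X,\widetilde{Y}]=0$ to get $\widetilde{Y}=C(w)z^2\,\partial/\partial z + B(w)\,\partial/\partial w$ -- is a correct and arguably cleaner alternative to the paper's computation at such points; it gives both the holomorphic extension and the tangency to $D$ there.

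The last step, however, is a genuine gap, and it is precisely where the real content of the lemma lies. The locus you have not covered -- the singular fiber $F_0=\mathcal{P}^{-1}(0)$ (a union of rational curves, on which $X$ vanishes identically, and most of which lies \emph{outside} $B_{\varepsilon}$), together with the point $P=D\cap F_0$ -- has codimension one in $M_{\varepsilon}$. No Levi-type removable-singularity theorem applies across a codimension-one analytic set: meromorphic extension across a divisor is simply false in general (think of $e^{1/w}$ on $\{w\neq 0\}$), and Levi's theorem requires codimension at least two. Worse, no soft argument can possibly work here, because when $f$ has a pole of order $\geq 2$ the extended field genuinely acquires poles along components of $F_0$; one must therefore \emph{compute}. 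The paper closes this step by exploiting the algebraic structure of $Y$ that your proof never uses: one writes $Y=g(x^ny^m)\bigl[\,Y'_{\rm lin}+f(x^ny^m)X/m\,\bigr]$, where $Y'_{\rm lin}=-bmx\,\partial/\partial x+amy\,\partial/\partial y$ is linear. The linear field lifts holomorphically under the sequence of blow-ups defining $M_{\varepsilon}$, $X$ lifts holomorphically (locally $wz^2\,\partial/\partial z$ near $P$, in coordinates with $\{z=0\}\subset D$ and $\{w=0\}\subset F_0$), and $f$, $g$ are functions of the first integral, which descends to the base coordinate $w$; hence the lift of $Y$ is \emph{manifestly} meromorphic, with the explicit local expression $Y'=-z(1+wzf(w)/m)\,\partial/\partial z - w\,\partial/\partial w$ near $P$, from which both the meromorphic extension and the invariance of $D$ are read off. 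You correctly identify the singular fiber as ``the main obstacle,'' but naming the obstacle and gesturing at ``ruling out essential singularities'' is not a mechanism for overcoming it; without the structural decomposition of $Y$ (or an equivalent explicit computation near $F_0$), the proof is incomplete at its decisive point.
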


\begin{proof}
Whereas the vector field $Y$ is only defined on $B_{\varepsilon}$, it can be considered as
globally defined on the fiber of $\mathcal{P}$ through $(0,0)$: this is clear when $Y$
vanishes identically on the coordinate axes. On the other hand, if the restriction of $Y$ to, say the $x$-axis, is not
identically zero (i.e. $g(0) \neq 0$), then it is given by $x \partial /\partial x$ and the claim follows again.
Next we observe that $Y$ has a holomorphic extension, still denoted by $Y$ to all of $M_{\varepsilon} \setminus \Delta_x$. Indeed,
to define $Y$ at a point $p \in M_{\varepsilon} \setminus \Delta_x$ which does not lie in the fiber of
$\mathcal{P}$ through $(0,0) \in \C^2$, we proceed as follows: choose $t \in \C$ such that $\phi (t,p)$ belongs to
$B_{\varepsilon}$, where $\phi$ stands for the flow of $X$ on $M_{\varepsilon}$. Consider also
the corresponding diffeomorphism $\phi^t : M_{\varepsilon} \rightarrow M_{\varepsilon}$. Then set $Y (p) =
D_{\phi^t (p)}\phi^{-t} . Y(\phi^t (p))$. The fact that the (semi-global) flow of $X$ in $B_{\varepsilon}$ preserves $Y$
combines with the connectedness of the intersections of the fibers of $\calp$ with $B_{\varepsilon}$ to ensure that
the vector $Y(p)$ is unambiguously defined so that $Y$ has the desired holomorphic extension to
$M_{\varepsilon} \setminus \Delta_x$.

To finish the proof, it only remains to consider the extension of $Y$ to $D$. Since $D$ is not contained in
the fiber $\mathcal{P}^{-1} (0)$, we denote by $P$ the intersection point $D \cap \mathcal{P}^{-1} (0)$.
Around $P$, there are coordinates $(z,w)$ such that $\{ z=0\} \subset D$,
$\{ w=0 \} \subset \mathcal{P}^{-1} (0)$, and where $X$ is locally given by $wz^2 \partial /\partial z$. This
follows directly from the construction of $\mathcal{P}$.
Indeed, essentially it suffices to keep in mind that $\Delta_x$ is a $-1$-curve arising from the blow up of a radial singular
point for the foliation associated with $X$ (so that the transform of the foliation associated with $X$ will be transverse
to $\Delta_x$). To consider the vector field $Y$, we then proceed as follows. First, it is enough to consider
the vector field $Y' = Y /g(x^ny^m)$. Note that $Y'$ still commutes with $X$. Furthermore, since around $p$ the first
integral of $X$ becomes the function $w$, it follows that $Y' = Y /g(w)$ and hence it suffices to prove that $Y'$ has a meromorphic
extension as indicated.

Now note that $Y'$ is given by the sum of the linear vector field $Y'_{\rm lin} = -bmx \partial /\partial x + am y
\partial /\partial y$ plus the vector field $fX/m$. The linear vector field can naturally be transformed under
the blow-ups used in the construction of $\mathcal{P}$. It turns out that its expression in $(z,w)$
coordinates is $Y'_{\rm lin} = - z \partial /\partial z +  w \partial /\partial w$ (up to a multiplicative constant).
In turn, $fX/m$ becomes $(f(w) wz^2)/m  \partial /\partial z$ since $f$ is function of the first integral $x^n y^m$ and
$X$ is transformed in $wz^2 \partial /\partial z$. Therefore, in $(z,w)$-coordinates, we have
\begin{equation}
Y' = -z (1 + wzf(w)/m) \partial /\partial z - w \partial /\partial w \, . \label{Y'-thevectorfield-11}
\end{equation}
Since $f$ is meromorphic, the desired extension of $Y'$ and $Y$ follows. Moreover, whether or not $f$ is meromorphic,
the foliation associated with $Y$ leaves the axis $\{ z=0\} \subset D$ as can be seen by multiplying $Y'$ by the power of $w$
corresponding to the order of the pole of $f$ minus~$1$. The lemma is proved.
\end{proof}

Owing to Lemma~\ref{Tired-1}, the vector field $Y$ defines a singular holomorphic foliation $\fol_Y$ on all of $M_{\varepsilon}$.
Furthermore, $D$ is invariant by $\fol_Y$. In particular, the
local holonomy map of $D$ with respect to $\fol_Y$ can then be considered on a neighborhood
of $P$. Recall that $f_0$ is the constant term in the Laurent expansion of $f$ around $0 \in \C$.

\begin{lema}
\label{Tired-2}
The local holonomy map of $D$ with respect to $\fol_Y$ coincides with the identity if and only if $f_0 =0$.
\end{lema}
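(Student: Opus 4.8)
The plan is to read off the holonomy directly from the normal form (\ref{Y'-thevectorfield-11}), exploiting that in the $(z,w)$-chart around $P$ the foliation $\fol_Y$ is governed by a Bernoulli equation for which $D=\{z=0\}$ is a distinguished solution. First I would record that, using that the linear part of $Y'$ is $-z\,\partial/\partial z + w\,\partial/\partial w$ and that $fX/m$ transforms into $f(w)wz^2/m\,\partial/\partial z$, the leaves of $\fol_Y$ near $P$ satisfy
\[
\frac{dz}{dw} \;=\; -\,\frac{z}{w} \;-\; \frac{f(w)}{m}\,z^2 ,
\]
obtained by dividing the two components of $Y'$ in (\ref{Y'-thevectorfield-11}); the linear part contributes the term $-z/w$ and $fX/m$ the quadratic term. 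The curve $D=\{z=0\}$ is the obvious solution, the fibre $\mathcal{P}^{-1}(0)=\{w=0\}$ is invariant as well, and the local holonomy of $D$ at $P$ is the return map on a transversal $\Sigma=\{w=w_0\}$ (coordinate $z$) obtained by continuing the leaves as $w$ runs once around $0$.

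The key step is to linearize this Bernoulli equation by the substitution $u=1/z$, which sends it to the linear equation $u' - u/w = f(w)/m$ with a regular singular point at $w=0$. The homogeneous part $u'=u/w$ has solution $u\propto w$ — this is where the resonant eigenvalue ratio $-1:1$ of the linear part enters — so the integrating factor is $w^{-1}$ and the leaves are recovered as the level sets
\[
\frac{1}{z\,w}\;-\;\frac{1}{m}\int \frac{f(w)}{w}\,dw \;=\; C .
\]
I would then compute the monodromy of this first integral as $w$ traverses a small loop around $0$: the term $1/(zw)$ contributes nothing new (since $w$ returns to $w_0$), whereas $\int f(w)/w\,dw$ increases by $2\pi i\,\mathrm{Res}_{w=0}\!\big(f(w)/w\big)=2\pi i\,f_0$, because the residue of $f(w)/w$ at the origin is exactly the constant term $f_0$ of the Laurent series of $f$. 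Keeping $C$ fixed and solving for the new value $z_1=h(z_0)$ gives
\[
\frac{1}{h(z_0)} \;=\; \frac{1}{z_0} \;+\; \frac{2\pi i\,f_0\,w_0}{m},
\qquad\text{i.e.}\qquad
h(z_0)=\frac{z_0}{\,1+\tfrac{2\pi i f_0 w_0}{m}\,z_0\,},
\]
so that $h=\mathrm{id}$ if and only if $f_0=0$, which is the assertion of the lemma.

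Conceptually the reason the answer is $f_0$ (and not some other coefficient of $f$) is that the eigenvalue ratio $-1:1$ forces the linear part of the holonomy to be trivial, its multiplier being $e^{2\pi i(-1)}=1$; hence $h$ is automatically tangent to the identity and its only obstruction to being the identity is the resonant term, which the integrating factor $w^{-1}$ isolates as the residue $f_0$. The step I expect to require the most care is justifying that the formal integration above genuinely computes the holonomy rather than a merely formal object: namely that the first integral $1/(zw)-\tfrac1m\int f/w\,dw$ is well defined and yields a single-valued return map after analytic continuation, that this map is a convergent germ of biholomorphism of $(\Sigma,D\cap\Sigma)$, and that the whole argument is insensitive to whether $f$ is genuinely meromorphic — only the $w^{-1}$-coefficient of $f(w)/w$ contributes to the monodromy, so the order of the pole of $f$ is irrelevant and the clearing of poles used in Lemma~\ref{Tired-1} does not affect the computation.
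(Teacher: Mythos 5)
Your proposal is correct and follows essentially the same route as the paper: the paper also explicitly integrates the Riccati/Bernoulli equation for the leaves of $\fol_Y$ near $D$ (it parametrizes the loop $w=e^{2\pi i k}$ and substitutes $z(k)=\alpha(k)e^{-2\pi i k}$, then integrates $\alpha'/\alpha^2$ — precisely your integrating-factor and $u=1/z$ steps), and the identity $\int_0^1 f(e^{\pm 2\pi i k})\,dk=f_0$ it invokes is exactly your residue computation for $\oint f(w)/w\,dw$. Both arguments land on the same parabolic M\"obius germ $z\mapsto z/(1+\mathrm{const}\cdot f_0\, z)$, which is the identity if and only if $f_0=0$.
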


\begin{proof}
Since the statement depends only on the foliation $\fol_Y$ (and not on $Y$), we can consider the vector field
$Y'$ inducing the foliation $\fol_Y$ around~$P$. In the above coordinates $(z,w)$, $Y'$ is as in~(\ref{Y'-thevectorfield-11}).
To compute the holonomy map in question, consider the loop $c (k) = e^{2\pi i k}$, $k \in [0,1]$,
contained in $D \subset \{ z=0\}$.
We restrict $w$ to this loop and set $w = e^{2\pi i k}$ so that $w' = 2\pi i e^{2\pi i k} = 2\pi i w$.
The lift $(z(k), w(k))$ of $c$ in a leaf of $\fol_Y$ near $D$
is such that
\begin{eqnarray*}
\frac{dz}{dk} = \frac{dz}{dw} \frac{dw}{dk} & =  & \frac{-z(1 + wzf(w)/m)}{w} \, \ 2\pi i e^{2\pi i k} \\
& = & -2\pi i z(1 + wzf(w)/m) \, .
\end{eqnarray*}
To solve the above equation let $z (k) = \alpha (k) e^{-2\pi i k}$ so that
$$
\alpha' = -2\pi i \alpha^2 \frac{f (e^{-2\pi i k})}{m} \, .
$$
Integration of $\alpha'/\alpha^2$ from $k=0$ to $k=1$ yields
$$
-\frac{1}{\alpha (1)} + \frac{1}{\alpha (0)} = -2\pi i f_0 \, .
$$
Since $\alpha (0) = z(0)$ and $\alpha (1) = z(1)$, we conclude that the holonomy map associated with $D$
is given by the map
$$
z \longmapsto \frac{z}{1+2\pi i f_0z} \, .
$$
The above map coincides with the identity if and only if $f_0 =0$. Otherwise, it consists of a germ of parabolic
map with infinite order. The lemma is proved.
\end{proof}

\begin{proof}[Proof of Proposition~\ref{Maximal-X-Y-TheoremA}]
Owing to Theorem~B (or item~($\imath )$ in Corollary~C),
the proof of the proposition is reduced to showing that $Y$ is semicomplete on $M_{\varepsilon}$,
up to reducing $\varepsilon$. Indeed, if $Y$ is semicomplete as indicated, then Theorem~B ensures
that the Lie algebra generated by $X$ and $Y$ is univalent
on $M_{\varepsilon}$. In particular its restriction to $B_{\varepsilon} \subset M_{\varepsilon}$ is univalent as well.

We will then prove that $Y$ is semicomplete on $M_{\varepsilon}$ provided that the
functions $g$ and $f$ satisfy one of the conditions in the statement.
For this, recall that the
vector field $Y$ must preserve the fibration $\mathcal{P} : M_{\varepsilon} \rightarrow D$ and thus
it projects to a well defined one-dimensional vector field $Z$ on $D$. In fact, the discussion conducted
in the proof of Lemma~\ref{Tired-1} shows that
$$
Z = w g(w) \partial /\partial w \, .
$$

On the other hand, the foliation $\fol_Y$ is transverse to the fibers
of $\mathcal{P}$ away from the singular fiber. Unless explicit mention in contrary, all leaves of $\fol_Y$ considered
in the sequel are assumed not to be contained in $\mathcal{P}^{-1} (0)$. If $L$ is one such leaf, then the restriction
of $\mathcal{P}$ to $L$ yields a local diffeomorphism from $L$ to $D$.

\vspace{0.1cm}

\noindent {\it Claim}. The restriction $\mathcal{P}_L$ of $\mathcal{P}$ to $L$ as before is one-to-one if and only
if $f_0 =0$.

\noindent {\it Proof of the Claim}. The statement amounts to showing that the monodromy map $h$ of $\fol_Y$ with respect
to the fibration $\mathcal{P}$ coincides with the identity. Owing to the fact that $\fol_Y$ is (globally)
transverse to the fibration $\mathcal{P}$, there follows that $h$ is naturally identified with
an automorphism of a rational curve. However, since $D$ is invariant by $\fol_Y$, $D$ is naturally associated with
a fixed point of $h$. Moreover, the germ of $h$ at this fixed point coincides with the germ of the {\it local}\,
holonomy map of $D$ with respect to $\fol_Y$. In view of Lemma~\ref{Tired-2}, the mentioned germ coincides with the
identity if and only if $f_0 =0$ and this establishes Claim~1.\qed

Let us now assume aiming at a contradiction that $Y$ is not semicomplete on $M_{\varepsilon} \setminus \Delta_x$.
Denote by $\fol_Y$ the foliation associated to $Y$. Each leaf $L$ of $\fol_Y$ (not contained in $\mathcal{P}^{-1} (0)$)
is endowed with an abelian form $dT_L$ defined on $L$ by the pairing $dT_L . Y =1$. The abelian form $dT_L$ is called the
time-form induced by $Y$ on $L$. Now, if $Y$ is not semicomplete,
then there exists a leaf $L$ and an open (embedded) path $c : [0,1] \rightarrow L$ such that the integral of $dT_L$ over
$c$ equals zero. The path $c$ projects on $D$ as a path $\mathcal{P} (c) (t)$ that either is open or is a loop winding around $P \in D$ a
certain number of times (strictly different from zero). By construction, we have
$$
\int_c dT_L = \int_{\mathcal{P} (c)} \frac{dw}{wg(w)} \, .
$$

Assume now that $g(0) \neq 0$. Then the integral of $dw/wg(w)$ over $\mathcal{P} (c)$ cannot be equal to zero whether
$\mathcal{P} (c)$ is an open path or a loop with winding number different from zero. The resulting contradiction then implies
that $Y$ is semicomplete on $M_{\varepsilon}$ as desired.

Assume now that $g(0) = g''(0) =0$, $g'(0) \neq 0$ and $f_0 =0$. The vector field $Z$ is then conjugate to
$w^2 \partial /\partial w$ since its order at $0 \in \C$ is~$2$ while its residue is equal to zero. Hence the only possibility
of having
$$
\int_{\mathcal{P} (c)} \frac{dw}{wg(w)} = 0
$$
occurs when $\mathcal{P} (c)$ is a loop, possibly winding around $0 \in \C$. However, if $\mathcal{P} (c)$ is a loop
so must be $c$: since $f_0 =0$, the restriction of $\mathcal{P}$ to $L$ is injective according to the claim.
The proof of the proposition is completed.
\end{proof}

Proposition~\ref{Maximal-X-Y-TheoremA} provides sufficient conditions for the Lie algebra generated
by $X$ and $Y$ to be univalent. It is natural to wonder if these conditions are also necessary. The remainder
of this section will be devoted to prove that this is, in fact, the case. The corresponding results, however,
will not be used anywhere else in this paper.

In the sequel we always assume that $g(0) =0$ otherwise there is nothing to be proved. The condition $g'(0) \neq 0$
is therefore necessary. The assertion follows from observing that $Y$ is not semicomplete on a neighborhood of
$(0,0) \in \C^2$ if the order of $g$ at $0 \in \C$ is $l \geq 2$. Indeed, note that the first non-zero homogeneous
component of the Taylor series of $Y$ around $(0,0)$ is the vector field
$$
m \, (x^ny^m)^l [-bx \partial /\partial x + ay \partial /\partial y ] \, .
$$
This vector field must be semicomplete provided that $Y$ is semicomplete owing to the fact that the space of
semicomplete vector fields is closed under convergence on compact parts, see \cite{JR}. On the other hand, the
condition for this vector field to be semicomplete is to have $l(-bn+am)  = \pm 1$. However, since $am-bn=1$,
this cannot happen for $l \geq 2$.

In view of the preceding we must have $g'(0) \neq 0$ whenever $g(0) =0$. In the sequel, we assume that these two conditions
are satisfied. Next, we consider again the vector field
$Z = w g(w) \partial /\partial w$ induced by $Y$ in the leaf space of $X$. The order of $Z$ at $0 \in \C$ is equal
to~$2$. If the residue of the vector field is not zero (i.e. if $g''(0) \neq 0$), then there exists an open path
(``near a loop'') over which
the integral of the $1$-form $dw/wg(w)$ vanishes (see \cite{JR}). In other words, $Z$ is not semicomplete. Moreover,
whenever the mentioned path can be lifted to a leaf of $\fol_Y$, the restriction of $Y$ to the leaf in question
will not be semicomplete as well. On the other hand, if $g''(0) =0$, then the integral $dw/wg(w)$ over any small loop
winding about $0 \in \C$ is equal to zero. In addition, if $f_0 \neq 0$, then whenever a lift of the mentioned loop in a leaf of
$\fol_Y$ exists, the restriction of $Y$ to this leaf will not be semicomplete. Summarizing, to prove that
the conditions $g''(0) =0$ and $f_0 =0$ are also necessary for $Y$ to be semicomplete (and hence for the Lie algebra
generated by $X$ and $Y$ to be univalent), it suffices to prove the lemma below:

\begin{lema}
Fixed $\varepsilon >0$, there is a decreasing sequence $\{ \delta_j \}$ converging to~$0$ ($\delta_j >0$ for all $j$)
such that the loop $C(k) = \delta_j e^{2\pi i k}$, $k \in [0,1]$, contained in $D$ can be lifted in a leaf of $\fol_Y$
so that the lifted path is entirely contained in $B_{\varepsilon}$.
\end{lema}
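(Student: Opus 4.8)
The plan is to realise the required lift as an imaginary–time trajectory of the foliated field $Y' = Y/g(x^{n}y^{m})$ and to compare it with the trajectory of its linear part. Write $u = x^{n}y^{m}$; this is the first integral of $X$, so near $P$ it is the coordinate on the base $D$ (the value $w$ is identified with $u$) and the loop $C$ becomes $\{|u| = \delta_{j}\}$. Since $X(u) = 0$ and $Y'_{\mathrm{lin}}(u) = m(am-bn)u$, one gets $Y'(u) = m(am-bn)\,u = \pm m\,u$, so along \emph{every} flow line of $Y'$ one has $u(t) = u_{0}e^{\pm mt}$. Assume $am-bn = 1$ (the other sign is symmetric), take $u_{0} = \delta_{j}$ and let $t$ run over $[0, 2\pi i/m]$: then $u(t) = \delta_{j}e^{mt}$ satisfies $|u(t)| \equiv \delta_{j}$ and $u$ winds exactly once around $\{|u|=\delta_{j}\}$, with $u(2\pi i k/m) = \delta_{j}e^{2\pi i k} = C(k)$. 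The corresponding flow line lies in a leaf of $\fol_{Y'} = \fol_{Y}$ and projects onto $C$, hence it is a lift of $C$; it remains only to choose the initial point and the $\delta_{j}$ so that this flow line is contained in $B_{\varepsilon}$.

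First I would treat the linear model. For $Y'_{\mathrm{lin}} = -bm\,x\,\partial/\partial x + am\,y\,\partial/\partial y$ the imaginary–time flow is $(x_{0}e^{-bmt}, y_{0}e^{amt})$. At $t = 2\pi i/m$ it returns to $(x_{0},y_{0})$ \emph{precisely because $a$ and $b$ are integers} ($e^{\mp 2\pi i b} = e^{\pm 2\pi i a} = 1$), and at intermediate times $|x(t)| = |x_{0}|$ and $|y(t)| = |y_{0}|$ stay constant. Thus the linear lift is a ``horizontal'' circle on the torus $\{|x| = |y| = r\}$ with $r = \delta_{j}^{1/(n+m)}$, and it lies well inside $B_{\varepsilon}$ as soon as $r < \varepsilon$. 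I choose $x_{0} = y_{0} = \delta_{j}^{1/(n+m)}$ (so that $u_{0} = \delta_{j}$) and restrict to $\delta_{j} < \varepsilon^{\,n+m}$. This settles the lemma for the linearised foliation and furnishes the reference curve for the general case.

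For the full field write $Y' = Y'_{\mathrm{lin}} + (f/m)X$. Since $X(u) = 0$, the identity $u(t) = u_{0}e^{mt}$ persists for the full flow, so the genuine lift still projects exactly onto $C$ and stays on $\{|u| = \delta_{j}\}$; only its position along the fiber may drift. I would bound this drift by variation of constants over the bounded interval $|t| \le 2\pi/m$: the exponential factor produced by $DY'_{\mathrm{lin}}$ is $O(1)$, while on the torus the driving term $(f/m)X$ has size $\sim \delta_{j}^{-p}\,r^{\,a+b+1} = r^{\,a+b+1-p(n+m)}$, where $p$ is the order of the pole of $f$ and $\delta_{j} = r^{\,n+m}$. \textbf{The main obstacle} is exactly the control of this term when $f$ has a pole ($p \ge 1$), since $\delta_{j}^{-p}$ blows up as $\delta_{j} \to 0$; its resolution is arithmetic. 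The hypothesis that $(x,y) \mapsto x^{a}y^{b}f(x^{n}y^{m})$ be holomorphic forces $a - pn \ge 0$ and $b - pm \ge 0$, i.e.\ $p \le \min(a/n, b/m)$, while $am - bn = \pm 1$ gives $a/n \ne b/m$, whence $\min(a/n, b/m) < (a+b)/(n+m)$. Therefore $p < (a+b)/(n+m)$ and the exponent satisfies $a+b+1 - p(n+m) > 1$, so the drift is $o(r)$. Consequently, for every sufficiently small $\delta_{j}$ the full flow line stays within distance $o(r)$ of the reference torus and hence inside $B_{\varepsilon}$; any decreasing sequence $\delta_{j} \downarrow 0$ with $\delta_{j} < \varepsilon^{\,n+m}$ then works, proving the lemma.
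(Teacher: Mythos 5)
Your argument is correct, but it takes a genuinely different route from the paper's. The paper passes to the ramified coordinates $(z,w)$ with $x=z^m$, $y=w/z^n$ (so that $x^ny^m=w^m$ and $X=z^2w^b\,\partial/\partial z$), reduces the lifting to a scalar Riccati-type equation for $z(k)$ along the loop, integrates it in closed form via the substitution $z(k)=c(k)e^{-2\pi i b k}$, and controls the solution using only the estimate $b\geq mr$ ($r$ the pole order of $f$) together with the initial choice $|z(0)|=\delta^{1/(m+\tau)}$. You instead stay in the original coordinates and run a perturbation argument: the exact identity $Y'(u)=m(am-bn)u$ (the same cancellation the paper sees as the $w\,\partial/\partial w$ component of $\mathcal{Y}$) guarantees both that the trajectory projects exactly onto $C$ and that $|u|\equiv\delta_j$ along it, the integrality of $a$ and $b$ makes the imaginary-time linear flow $2\pi i/m$-periodic, and variation of constants bounds the drift off the reference torus. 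Your arithmetic input is correspondingly different: you need $p\leq\min(a/n,b/m)$ plus the mediant inequality (available because $am-bn=\pm 1$ forces $a/n\neq b/m$), giving drift $O(r^2)$ with $r=\delta_j^{1/(n+m)}$, whereas the paper's coordinates require only $b\geq mr$. What your approach buys is symmetry in $x$ and $y$ and complete transparency about where the hypotheses $a,b\in\N$ and $|am-bn|=1$ enter; what the paper's buys is an explicit formula for the lift, in the spirit of the holonomy computation of Lemma~\ref{Tired-2}, at the cost of the singular change of variables. Two points you should still write out: first, the bootstrap that legitimizes the variation-of-constants estimate (as long as the trajectory stays within, say, $r/2$ of the reference torus the driving term is $O(r^2)$, and the resulting drift is $\ll r$, so by continuity the trajectory never leaves that region and in particular exists on all of $[0,2\pi i/m]$; note that for purely imaginary time $e^{tDY'_{\mathrm{lin}}}$ is even an isometry, so your $O(1)$ exponential factor is actually $1$); second, the harmless discrepancy that your lift winds once around $P$ in the coordinate $u=x^ny^m$ on $D$ --- which is what the statement literally asks --- whereas the paper's own proof lifts the loop winding once in its ramified coordinate $w$, i.e.\ $m$ times in $u$; both versions suffice for the intended application, and your estimates iterate additively if the $m$-fold winding is desired.
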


\begin{proof}
We begin with a simple remark to be used in the course of the proof. Denoting by $r$ the order of the pole of $f$,
the estimate $b \geq mr$ must hold since the map
$(x,y) \rightarrow x^a y^b f(x^ny^m)$ is holomorphic.

Next, it is convenient to use slightly different (albeit essentially equivalent) coordinates $(z,w)$. Indeed, we consider
the singular map $H(z,w) = (z^m, w/z^m) = (x,y)$. We can assume $H$ to be defined on the set
\[
V = \{ (z,w) \in \C^2 \; , \; 0 < |z| < \sqrt[m]{\varepsilon} \;  \, {\rm and} \; \, |z| > \sqrt[m]{\frac{|w|}{\varepsilon}} \} \, .
\]
The reader will check that $H (V) \subset B_{\varepsilon}$.

Let $\mathcal{X}$ (resp. $\mathcal{Y}$) denote the vector field $X$ (resp. $Y$) in the (``singular'') coordinates $(z,w)$.
A direct computation yields
\[
\mathcal{X} = z^2 w^b \frac{\partial}{\partial z} \,
\]
and
\[
\mathcal{Y} = g(w^m) \left[ z \left( -b + \frac{1}{m} z w^b f(w^m) \right) \frac{\partial}{\partial z} +
w \frac{\partial}{\partial w} \right] \, .
\]
Note that the (local) leaf space of $X$ in $B_{\varepsilon}$ is given by $x^n y^m = {\rm cte}$ which, in our case,
means $w^m = {\rm cte}$. The leaf space in question is thus identified with the quotient of the $w$-axis by the rotation
of order~$m$.

With the preceding notation, to prove the lemma it suffices to show that for $\delta >0$ arbitrarily small,
the loop $w(k) = \delta e^{2\pi i k}$, for $k \in [0,1]$ can be lifted in some leaf of the foliation associated with
$\mathcal{Y}$ on its domain of definition~$V$.

We intend to check is the loop given in the coordinate $w$ by $w(k) = \delta e^{2\pi i k}$, for $k \in [0,1]$ can be lifted along any
leaf of the vector field $\mathcal{Y}$ on its domain of definition. To begin with, we set
\begin{eqnarray*}
\frac{dz}{dk} = \frac{dz}{dw} \, \frac{dw}{dk} & = & \frac{z \left( -b + \frac{1}{m} z w^b f(w^m) \right)}{w} 2\pi i w \\
& = & 2\pi i z \left( -b + \frac{1}{m} z \delta^b e^{2\pi i bk} f \left(\delta^m e^{2\pi i mk} \right) \right)
\end{eqnarray*}
Therefore
\begin{equation}\label{eq_diff_z_new}
\frac{dz}{dk} + 2\pi i b z = \frac{2\pi i}{m} z \delta^b e^{2\pi i bk} f \left(\delta^m e^{2\pi i mk} \right) \, .
\end{equation}
Let again $z(k) = c(k) e^{-2\pi i b k}$ so that $\vert z(k) \vert = \vert c(k) \vert$ and $c(k)$ satisfies the
equation
\[
\frac{dc}{dk} = \frac{2\pi i}{m} c^2 \delta^b f\left( \delta^m e^{2\pi i mk} \right) \, .
\]
By integrating $c'/c^2$ from $0$ to $k$, we obtain
\[
\frac{1}{c(k)} - \frac{1}{c (0)} = \delta^b \int_0^k \frac{2\pi i}{m} f\left( \delta^m e^{2\pi i mt} \right) dt \, .
\]
Thus
\[
c(k) = \frac{c(0)}{1 - c(0) \delta^b \int_0^k \frac{2\pi i}{m} f\left( \delta^m e^{2\pi i mt} \right) dt}
\]

Next note that $\vert f\left( \delta^m e^{2\pi i mt} \right) \vert$
has order $\delta^{-mr}$ for $\delta$ small. Thus $\delta^b \vert f\left( \delta^m e^{2\pi i mt} \right) \vert$
is bounded by a uniform constant when $\delta \rightarrow 0$. Hence, there is a constant $C_1$ (uniform as $\delta \rightarrow 0$)
such that
$$
\left| \int_0^k \delta^b \frac{2\pi i}{m} f\left( \delta^m e^{2\pi i m s} \right) ds \right| \leq  C_1 k \, .
$$
In turn, it follows the existence of a uniform constant $\beta >0$ such that
\begin{equation}
\vert c(k) -c(0) \vert \leq \beta \vert c(0) \vert^2 k \, . \label{liftingpaths-1}
\end{equation}

We are interested in the above estimate for $0 \leq k \leq 1$. In fact,
it is sufficient to consider the case $k=1$: if it can be proved that $z(1)$ belongs to $V$, it becomes clear
from the above estimate that $z(k)$ is entirely contained in $V$ for $k \in [0,1]$.

Recall that $V$ is defined by $|z| < \sqrt[m]{\varepsilon}$ and $|z| > \sqrt[m]{|w|/\varepsilon}$ and $\varepsilon$ is
fixed. On the other hand, we have
$\vert w (k) \vert = \delta$ for all $k \in [0,1]$. Take $\vert z(0) \vert = \vert c(0) \vert = \delta^{1/(m+\tau)}$
for some $\tau >0$ small. Thus $(z(0),w(0))$ lies in $V$ so long $\delta$ is small enough. Similarly equation~(\ref{liftingpaths-1})
ensures that
$$
\vert c(0) \vert \; (\, 1 - \beta k \vert c(0)\vert \, )  \leq \vert c(k) \vert \leq
\vert c(0) \vert \; (\, 1 + \beta k \vert c(0) \vert \, ) \, .
$$
It is now clear that $c(1)$ (and hence $c(k)$ for $k \in [0,1]$) satisfies $\vert c(1) \vert = \vert z(1) \vert <
\sqrt[m]{\varepsilon}$ provided that $\delta$ is small enough. Similarly, again for sufficiently small $\delta >0$,
we also have $\vert c(1) \vert^m = \vert z(1) \vert^m > \delta/ \varepsilon = |w|/\varepsilon$. This proves that the
lifted path $(z(k) ,w(k))$ remains in $V$ for $k \in [0,1]$ and ends the proof  of the lemma.
\end{proof}

%---------------------------------------------------------------------------------------------------------------------
%---------------------------------------------------------------------------------------------------------------------
%---------------------------------------------------------------------------------------------------------------------
%---------------------------------------------------------------------------------------------------------------------
%---------------------------------------------------------------------------------------------------------------------
%---------------------------------------------------------------------------------------------------------------------
%---------------------------------------------------------------------------------------------------------------------
%---------------------------------------------------------------------------------------------------------------------
%---------------------------------------------------------------------------------------------------------------------

\section{Globalization problem and leaf space for commuting vector fields}\label{Sec_realization}

Let us then consider the vector fields $X, \, Y$ defined on a neighborhood $U$ of the origin of $\C^2$ by
\[
X =  x^a y^b\left[ mx \frac{\partial}{\partial x} - ny \frac{\partial}{\partial y} \right]
\]
and by
\[
Y =  g(x^n y^m) \left[ x (-bm + x^a y^b f(x^ny^m)) \frac{\partial}{\partial x} + y \left(a m - \frac{n}{m} x^a y^b f(x^n y^m)\right)
\frac{\partial}{\partial y} \right]
\]
for some holomorphic function $g$ and some meromorphic function $f$. The function $f$, however, is such that
the map $(x,y) \rightarrow x^a y^b f(x^ny^m)$ is holomorphic with order at least~$1$ at the origin.
Finally $a,b,m,n$ are positive integers satisfying $am - bn \in \{-1,1\}$. The upshot being that
$X$ and $Y$ commute.

The $2$-dimensional
foliation spanned by $X$ and $Y$ on $U$ will be denoted by $\mcf$. This foliation is rather trivial since the open set
$U \setminus ( \{ x=0\} \cup \{ y=0\})$ is a leaf of $\mcf$. Nonetheless, following the discussion in
Section~$2$, we should rather consider the foliation $\overline{\mcf}$ on $\C^2 \times U$ spanned by
the commuting vector fields $\overline{X} = \partial /\partial t + X$ and $\overline{Y} = \partial /\partial s + Y$, i.e.
\[
\overline{X} = \frac{\partial}{\partial t} + x^a y^b\left[ mx \frac{\partial}{\partial x} - ny \frac{\partial}{\partial y} \right]
\]
and
\[
\overline{Y} = \frac{\partial}{\partial s} + g(x^n y^m) \left[ x (-bm + x^a y^b f(x^ny^m)) \frac{\partial}{\partial x} +
y \left(a m - \frac{n}{m} x^a y^b f(x^n y^m)\right) \frac{\partial}{\partial y} \right] \, .
\]
The (regular) foliation $\overline{\mcf}$ has leaves of complex dimension~$2$.

The objective of this section is to study the leaf space of $\overline{\mcf}$ so as to prove it is Hausdorff;
cf. Theorem~\ref{FinalTheoremCommutingVF}. By combining this theorem and Proposition~\ref{Maximal-X-Y-TheoremA},
Theorem~A in the Introduction follows at once. Yet, Theorem~\ref{FinalTheoremCommutingVF} may have some interest on its own,
even when the Lie algebra generated by $X$ and $Y$ is not univalent, since the Hausdorff character of leaf space is a
very rare and clearly important phenomenon.

The study of the leaf space of $\overline{\mcf}$ should essentially be divided in three cases, namely:

\smallbreak

{\bf Case 1}: Case where $ab\neq 0$;

{\bf Case 2}: Case where $a=0$ (this assumption immediately implies that $b=1$ and $n=1$);

{\bf Case 3}: Case where $b=0$ (analogously to the previous case, if $b=0$ then $a=1$ and $m=1$).

\smallbreak

It should be mentioned that, no matter the case we are considering, $\C^2 \times U$ admits a partition on four disjoint
manifolds that are invariant under the $2$-dimensional foliation $\overline{\mcf}$ and will be denoted by
$S_0, S_x, S_y$ and $S_{x,y}$. Namely, we have:
\begin{align*}
&S_0 = \{(t,s,x,y) \in \C^2 \times U: x=y=0\} \\
&S_x = \{(t,s,x,y) \in \C^2 \times U: x \ne 0, \, y=0\} \\
&S_y = \{(t,s,x,y) \in \C^2 \times U: x=0, \, y \ne 0\} \\
&S_{x,y} = \{(t,s,x,y) \in \C^2 \times U: xy \ne 0\} \, .
\end{align*}
The leaf spaces arising from the restrictions of $\overline{\mcf}$ to each one of these invariant manifolds
are summarized in Table~\ref{table: spaceofleaves}. It actually suffices to describe the corresponding spaces
in Case~$1$ since the remaining cases follow analogously. So let us assume in what follows that $ab \neq 0$.

\bigbreak

{\bf 1. Space of leaves on $S_0$}

\smallbreak

First of all, it should be noted that $S_0$ is itself a leaf of the foliation $\overline{\mcf}$ regardless
of the values of $m, \, n, \, a$ and $b$ and of the functions $f$ and $g$. In fact, the restriction of
$\overline{X}$ and $\overline{Y}$ to this manifold is simply given by
\[
\overline{X}|_{S_0} = \frac{\partial}{\partial t}   \quad \mbox{ and } \quad  \overline{Y}|_{S_0} = \frac{\partial}{\partial s} \, .
\]
Thus $S_0$ corresponds to a single point in the space of leaves of $\overline{\mcf}$.

\bigbreak

{\bf 2. Space of leaves on $S_y$ (the case of $S_x$ being analogous)}

\smallbreak

Both $S_x$ and $S_y$ are $3$-dimensional manifolds equipped with $2$-dimensional foliations, namely the restrictions
of $\overline{\mcf}$ to $S_x$ and to $S_y$, respectively. The space of leaves of $\overline{\mcf}$ on $S_y$ (resp. $S_x$)
corresponds then to a $1$-dimensional manifold. Let us then describe this manifold.

The restrictions of the vector fields $\overline{X}$ and $\overline{Y}$ to $S_y$ are respectively given by
\[
\overline{X}|_{S_y}  =   \frac{\partial}{\partial t}  \quad \mbox{ and } \quad
\overline{Y}|_{S_y}  =  \frac{\partial}{\partial s} + a m g(0) y \frac{\partial}{\partial y} \, .
\]
Let $\overline{\mcf}|_{S_y}$ denote the foliation spanned by these two vector fields on $\C^2 \times V$, where $V = \{
y \in \C^{\ast}: (0,y) \in U \}$. Consider the fibration of $S_y \simeq \C^2 \times V$ with base $V$, whose fiber map
is given, in coordinates $(t,s,y)$ by $\pi_y (t,s,y) = y$.

In the simpler case where $g(0)=0$, the foliation $\overline{\mcf}|_{S_y}$ is induced by the vector fields $\partial /\partial t$
and $\partial /\partial s$ and, consequently, the fibers of the above mentioned fibration are the leaves of the given foliation.
The leaf space in this case is simply the base of the fibration, namely the punctured disc $\mathbb{D}^{\ast}$.

As for the case where $g(0) \ne 0$, the projection of every single leaf of $\overline{\mcf}|_{S_y}$ covers the base $V$. More
precisely, fixed $\varepsilon$ arbitrarily small and denoting by $\Sigma_{\varepsilon}$ the fiber above $y = \varepsilon$,
every leaf of $\overline{\mcf}|_{S_y}$ intersects $\Sigma_{\varepsilon}$ transversely. Furthermore the projection
$\pi_y$ restricted to a leaf of $\overline{\mcf}|_{S_y}$
provides a covering map onto the base. In turn, the fundamental
group of the base has a single generator $\sigma$. To describe
the space of leaves of $\overline{\mcf}$ on $S_y$, we have to compute the monodromy map associated with $\sigma$.
The mentioned monodromy map is entirely determined by $\overline{Y}|_{S_y}$ since the $1$-dimensional leaves defined by
$\overline{X}|_{S_y}$ are contained in the fibers. Up to a homothety, we can assume that $\sigma$ is given by
\[
y(k) = \mathrm{e}^{2\pi i k}, \,  k\in [0,1] \, .
\]
Thus, we have
\[
\frac{d s}{d k} = \frac{d s}{d y}\frac{d y}{d k} = \frac{1}{ a m g(0) y} 2 \pi i y= \frac{2\pi i}{a m g(0)} \, .
\]
Integrating the above differential equation, we obtain $s(k) = c + \frac{2\pi i}{a m g(0)} k$, with $c \in \C$.
Looking at $k=0$ and at $k=1$, we conclude that the monodromy map with respect to $\overline{Y}$ is given by
\[
h_y: \, \, s \longmapsto s + \frac{2\pi i}{a m g(0)} \, .
\]
Thus, the leaf space over $S_y$ in the particular case where $g(0) \ne 0$ is the quotient of $\mathbb{C}$ under the
translation above, that is,
\[
\Large{\sfrac{\mathbb{C}}{<s \mapsto s+\sfrac{2\pi i}{a m g(0)}>}} \, .
\]

\bigbreak

{\bf 3. Space of leaves over $S_{x,y}$}

\smallbreak

Next we will look at the restriction of the vector fields $\overline{X}$ and $\overline{Y}$ to $S_{x,y}$, namely
\begin{eqnarray*}
\overline{X}|_{S_{x,y}} & = &  \frac{\partial}{\partial t} + x^a y^b \left[ mx \frac{\partial}{\partial x} -
n y  \frac{\partial}{\partial y}\right] \medskip\\
\overline{Y}|_{S_{x,y}} & = & \frac{\partial}{\partial s} + g(x^ny^m) \left[
x ( -bm + x^ay^b f(x^ny^m)) \frac{\partial}{\partial x} + y( am-\frac{n}{m} x^ay^b f(x^ny^m) \frac{\partial}{\partial y}\right]
\end{eqnarray*}
To abridge notation, let $V^{\ast}$ stand for $U \setminus (\{x=0\} \cup \{y=0\})$ in the sequel.
Consider now the fibration of $S_{x,y} \simeq \C^2 \times V^{\ast}$ with base $V^{\ast}$
whose projection is given in coordinates $(t,s,x,y)$ by $\pi_{x,y} (t,s,x,y) = (x,y)$. The fibers of
this fibration have dimension~$2$ as do the leaves of the restriction of $\overline{\mcf}$ to $S_{x,y}$. Furthermore,
the leaves of the restriction of $\mcf$ to $S_{x,y}$ are transverse to the fibers of $\pi_{x,y}$ and, again,
the restriction of $\pi_{x,y}$ to any one of these leaves yields a covering map onto $V^{\ast}$.

The base $V^{\ast}$ of the above mentioned fibration  has fundamental group generated by two loops
$\sigma_1$ and $\sigma_2$ which can be chosen (up to homothety) as
\[
\sigma_1(k) = (\mathrm{e}^{2\pi i m k},\mathrm{e}^{-2\pi i n k})  \quad \mbox{ and } \quad   \sigma_2(k)  = (\mathrm{e}^{-2\pi i b k},
\mathrm{e}^{2\pi i a k}), \quad k\in[0,1] \, .
\]

Let us first compute the monodromy map of $\overline{\mcf}$ associated with $\sigma_1$. For this we should note that $\sigma_1$ is
tangent to $X$ so that the loop $\sigma_1$ can be lifted along the leaves of $\overline{X}$. Recalling that for $\sigma_1$ we
have $x(k) = \mathrm{e}^{2\pi i m k}$ and $y(k)=\mathrm{e}^{-2\pi i n k}$, $k \in [0,1]$, there follows that the lift of
$\sigma_1$ satisfies
\begin{align*}
\frac{dt}{dk} &= \frac{dt}{dx}\frac{dx}{dk} \left( = \frac{dt}{dy}\frac{dy}{dk} \right)
= \frac{1}{mx^{a+1}y^b} 2 \pi i m x \\
&= \frac{2 \pi i}{x^a y^b}
= \frac{2 \pi i}{\mathrm{e}^{2\pi i (a m - bn) k} } \\
&= 2 \pi i \mathrm{e}^{\mp 2\pi i k}
\end{align*}
and
\[
\frac{ds}{dk} = \frac{ds}{dx}\frac{dx}{dk} \left( = \frac{ds}{dy}\frac{dy}{dk} \right) = 0 \, .
\]
Integrating the above system of differential equations, we obtain
$$
t(k) = c_1 \mp \mathrm{e}^{\mp 2\pi i k} \; \; \; {\rm and} \; \; \;
s(k) = c_2
$$
for some constants $c_1, c_2 \in \mathbb{C}$. Thus we have $t(1) = t(0)$ and $s(1) = s(0)$. Summarizing, the monodromy map
$\varphi_1$ arising from $\sigma_1$ reduces to the identity.

As for the monodromy map with respect to $\sigma_2$, we need to consider a combination of the vector fields $\overline{X}$ and
$\overline{Y}$ with respect to which the lift of $\sigma_2$ can be taken. Naturally, we may consider the vector field
$\overline{W}$ given as
\begin{eqnarray*}
\overline{W} & = & \frac{1}{m} g(x^ny^m) f(x^ny^m) \overline{X} - \overline{Y} \medskip\\
& = & \frac{1}{m} g(x^ny^m) f(x^ny^m) \frac{\partial}{\partial t} - \frac{\partial}{\partial s}
- m g(x^ny^m) \left[ -bx \frac{\partial}{\partial x} + ay \frac{\partial}{\partial y} \right] \, .
\end{eqnarray*}
Now, recalling that for $\sigma_2$, $x(k) = \mathrm{e}^{-2\pi i b k}$ and $y(k) = \mathrm{e}^{2\pi i a k}$, $k \in [0,1]$,
there follows that
\begin{align*}
\frac{dt}{dk} &= \frac{dt}{dx}\frac{dx}{dk} \left( = \frac{dt}{dy}\frac{dy}{dk} \right)
= \frac{\frac{1}{m} g(x^ny^m) f(x^ny^m)}{-m g(x^ny^m)(-bx)} (-2 \pi i b x) \\
&= - \frac{2\pi i f\left( \mathrm{e}^{\pm 2\pi i k} \right)}{m^2}
\end{align*}
and
\begin{align*}
\frac{ds}{dk} &= \frac{ds}{dx}\frac{dx}{dk} \left( = \frac{ds}{dy}\frac{dy}{dk} \right)
= \frac{-1}{-m g(x^ny^m)(-bx)} (-2 \pi i b x) \\
&= \frac{2\pi i}{m g\left( \mathrm{e}^{\pm 2\pi i k} \right)}
\end{align*}
In turn, the functions $f$ and $1/g$ can be expanded in Laurent series as
$$
f(z) = \sum_{p \geq p_1} f_p z^p \; \; \;
{\rm and} \; \; \; 1/g(z) = \sum_{p\geq p_2} g_p z^p \, .
$$
The above equations then become
\[
\frac{dt}{dk} = - \frac{2\pi i}{m^2} f_0 - \frac{2\pi i}{m^2} \sum_{\substack{p \geq p_1 \\ p \ne 0}} f_p \mathrm{e}^{\pm 2\pi i p k}
\]
and
\[
\frac{ds}{dk} = \frac{2\pi i}{m} g_0 + \frac{2\pi i}{m} \sum_{\substack{p \geq p_2 \\ p \ne 0}} g_p \mathrm{e}^{\pm 2\pi i p k} \, .
\]
Finally by integrating the above system of differential equations above, we have
\[
t(k) = c_1 - \frac{2\pi i}{m^2}f_0 k \mp \frac{1}{m^2} \sum_{\substack{p \geq p_1 \\ p \ne 0}}
\frac{f_p}{p} \mathrm{e}^{\pm 2\pi i p k}
\]
and
\[
s(k) = c_2 + \frac{2 \pi i}{m} g_0 k \pm \frac{1}{m}\sum_{\substack{p \geq p_2 \\ p \ne 0}} \frac{g_p}{p} \mathrm{e}^{\pm 2\pi i p k}
\]
for some constants $c_1, c_2 \in \mathbb{C}$. Therefore $t(1) = t(0) - \frac{2\pi i}{m^2}f_0$ and $s(1) = s(0) + \frac{2 \pi i}{m} g_0$.
In other words, the monodromy with respect to $\sigma_2$ is given by the translation
\[
\varphi_2 : \, \, (t,s)\longmapsto \left( t -  \frac{2 \pi i}{m^2}f_0 , s + \frac{2\pi i}{m} g_0\right) \, .
\]

The space of leaves over $S_{x,y}$ is then the quotient of $\mathbb{C}^2$ under the two translations $\varphi_1, \, \varphi_2$.
Since $\varphi_1$ reduced to the identity map, the group $G$ generated by these two translations is, clearly, a cyclic group if
not reduced to the identity. In fact, we have
that
\[
G = <\varphi_1, \, \varphi_2> = <\varphi_2> \, .
\]

\smallbreak

%
%\begin{obs}\label{obs_g(0)}
%{\rm Note that in the particular case where $g(0) \ne 0$, the group $G$ is not reduced to the identity.
%The case of pairs of vector fields $(X,Y)$ as above having a degenerate singular point at the origin is,
%however, of special interest and this requires us to have $g(0) =0$. Still we would like to point out that,
%even when $g(0) =0$, there are numerous examples of functions $g$ and $f$ for which $G$ is not reduced to
%the identity. For example, it suffices to consider the case where $g(z) = z$ and $f(z) = c$, for some
%$c \in \C^{\ast}$.}
%\end{obs}
%

\begin{table}[h!]
\centering
\begin{tabular}{p{0.4\textwidth}p{0.6\textwidth}} \\ \toprule
  \multicolumn{2}{c}{Leaf space over $S_0$}  \\ \midrule
  $\{ \ast \}$ & for all $a,b \in \mathbb{N}$ \\
 \midrule
 \multicolumn{2}{c}{Leaf space over $S_y$} \\ \midrule
  $\mathbb{C} $ & $a=0$ \\
  $\mathbb{D}^\ast$	& $a\neq 0 \wedge g(0) =0$\\
  \Large{$\sfrac{\mathbb{C}}{\left< s \mapsto s + \frac{2\pi i}{a m g(0)} \right>}$} & $a\neq 0 \wedge g(0) \neq 0$  \\ \midrule
\multicolumn{2}{c}{Leaf space over $S_x$}\\ \midrule
  $\mathbb{C}$ & $b = 0$    \\ %\midrule
  $\mathbb{D}^\ast$  & $b\neq 0 \wedge g(0) = 0$ \\ %\midrule
  \Large{$\sfrac{\mathbb{C}}{\left< s \mapsto s - \frac{2\pi i}{b m g(0)} \right>}$} & $b\neq 0 \wedge g(0) \neq 0$ \\ \midrule
\multicolumn{2}{c}{Leaf space over $S_{x,y}$}  \\ \midrule
  \Large{
 $\sfrac{\mathbb{C}^2}{\left< (t,s) \mapsto (t - \frac{2\pi i}{m^2} f_0, s + \frac{2\pi i}{m} g_0) \right>}$
} &   $f_0$ and $g_0$ are, respectively, the zeroth order terms of the Laurent series of $f$ and $1/g$
\\\bottomrule
\end{tabular}
\caption{Leaf space of $\overline{\mcf}$}\label{table: spaceofleaves}
\end{table}

\bigbreak

The previous calculations automatically lead us to the following result.

\begin{prop}\label{Prop_leaf_Sxy}
The leaf space of $\overline{\mcf}$ on $S_{x,y}$ is Hausdorff independently of the values of $m, \, n, \, a, \, b$
and of the functions $g$ and $f$. \qed
\end{prop}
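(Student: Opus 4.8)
The plan is to read the answer directly off the monodromy computations just completed. Those calculations identify the leaf space of $\overline{\mcf}$ over $S_{x,y}$ with the quotient of the fiber $\C^2 = \{(t,s)\}$ of $\pi_{x,y}$ by the group $G = \langle \varphi_1, \varphi_2 \rangle$ of deck transformations generated by the monodromies along the two generators $\sigma_1, \sigma_2$ of $\pi_1(V^{\ast})$. Thus the entire proposition reduces to checking that this quotient is Hausdorff, for arbitrary parameters and arbitrary $f, g$.

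The essential observation is that $\varphi_1$ is the identity, so $G = \langle \varphi_2 \rangle$ is a \emph{cyclic} group generated by the single translation $\varphi_2 \colon (t,s) \mapsto (t - \frac{2\pi i}{m^2} f_0,\, s + \frac{2\pi i}{m} g_0)$. First I would record that a cyclic group generated by one translation of $\C^2 \simeq \R^4$ is always a closed, discrete subgroup: writing $v = (-\frac{2\pi i}{m^2} f_0, \frac{2\pi i}{m} g_0)$ for the translation vector, the orbit $\Z v$ is either trivial (when $v = 0$, i.e. $f_0 = g_0 = 0$) or a properly embedded copy of $\Z$ whose points escape to infinity as $|n| \to \infty$ (when $v \neq 0$). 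In both cases $G$ acts on $\C^2$ freely and properly discontinuously by biholomorphisms.

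From here the conclusion is standard: a free, properly discontinuous action of a group by biholomorphisms on a Hausdorff complex manifold produces a Hausdorff quotient complex manifold. Since the parameters $m, n, a, b$ and the functions $g, f$ enter only through the translation vector $v$, and never affect the cyclic (hence closed, discrete) nature of $G$, the leaf space $\C^2 / G$ is Hausdorff in all cases, which is exactly the assertion.

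I would expect no genuine obstacle here; the one point worth emphasizing is precisely \emph{why} the quotient cannot fail to be Hausdorff. The potential danger in quotients by translation groups is non-discreteness (for instance, a rank-two subgroup generated by two translations with a badly commensurable ratio would be dense and yield a non-Hausdorff quotient). That danger is ruled out here exactly because $\varphi_1 = \mathrm{id}$ collapses $G$ to a cyclic group. This structural fact is doing all the work, and it is the step I would flag as the conceptual heart of the argument, even though it is computationally trivial given the preceding monodromy analysis.
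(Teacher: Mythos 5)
Your proposal is correct and follows essentially the same route as the paper: the paper offers no separate proof, stating the proposition with a terminal QED precisely because it reads the result directly off the preceding monodromy computations, exactly as you do. Your explicit verification that the cyclic group $\langle \varphi_2 \rangle$ of translations acts freely and properly discontinuously (so that $\C^2/\langle \varphi_2\rangle$ is a Hausdorff complex manifold, whether $\varphi_2$ is trivial or not) simply makes precise the step the paper leaves implicit.
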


We are now ready to prove that the entire leaf space of $\overline{\mcf}$ on $\C^2 \times U$ is Hausdorff which
corresponds to Theorem~\ref{FinalTheoremCommutingVF} below.

\begin{teo}
\label{FinalTheoremCommutingVF}
The total leaf space of $\overline{\mcf}$ on $\C^2 \times U$ is Hausdorff for every $m, \, n, \, a, \, b$ and
functions $g$ and $f$ as above.
\end{teo}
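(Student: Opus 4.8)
The plan is to prove the Hausdorff property in the form: the \emph{graph} of $\overline{\mcf}$ — the set of pairs $(p,q)\in(\C^2\times U)\times(\C^2\times U)$ with $p$ and $q$ on a common leaf — is closed. This is equivalent to Hausdorffness of the quotient topology, so I would start from sequences $p_j\to p$ and $q_j\to q$ with $p_j,q_j$ on a common leaf $L_j$, and try to conclude that $p$ and $q$ lie on one leaf. First I would use the stratification $\C^2\times U=S_0\sqcup S_x\sqcup S_y\sqcup S_{x,y}$ into $\overline{\mcf}$-invariant pieces: since each leaf sits inside a single stratum, after passing to a subsequence all the $L_j$ lie over one stratum $S$, and then $p,q\in\overline S$. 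When $S=S_{x,y}$ and \emph{both} $p,q\in S_{x,y}$ the desired conclusion is exactly the closedness of the graph over $S_{x,y}$, i.e. Proposition~\ref{Prop_leaf_Sxy}; when $S$ is one of $S_0,S_x,S_y$ and the limits stay in that same stratum, it follows from the corresponding (manifestly Hausdorff) entries of Table~\ref{table: spaceofleaves}. Hence the whole point is to control sequences whose limits fall into a \emph{deeper} stratum than the one carrying $L_j$.

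The main tool I would set up is a pair of (multivalued) first integrals of $\overline{\mcf}$ on $S_{x,y}$. With $u=x^ny^m$, $v=x^ay^b$ and $\epsilon=am-bn\in\{\pm1\}$, a direct computation gives $\overline{X}(u)=0$, $\overline{Y}(u)=\epsilon m\,g(u)\,u$, and $\overline{X}(1/v+\epsilon t)=0$, so one obtains first integrals $\Phi_1=s-G(u)$ and $\Phi_2=1/v+\epsilon t-K(u)$, where $G'(u)=1/(\epsilon m g(u)u)$ and $K'(u)=-f(u)/(m^2u)$; their multivaluedness (a logarithmic term from the residues at $u=0$) reproduces the translations $\varphi_1=\mathrm{id}$ and $\varphi_2$ computed above. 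The qualitative feature I would exploit is \emph{divergence}: along any fixed leaf over $S_{x,y}$ one has $u\to0$ (as $m,n\ge1$) whenever $(x,y)$ approaches either axis, hence $G(u)\to\infty$ and $1/v\to\infty$ (in Case~1, where $a,b\ge1$), so $s$ and $t$ escape. This disposes of the ``mixed'' situations at once: if $L_j\subset S_{x,y}$ with $p\in S_{x,y}$ but $q$ on an axis, then $\Phi_1$ constant on $L_j$ forces $s(q_j)=\Phi_1+G(u(q_j))\to\infty$, contradicting $q_j\to q$; the same divergence prevents a sequence over $S_x$, $S_y$ from limiting onto $S_0$.

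The hard part will be the case where $L_j\subset S_{x,y}$ and \emph{both} $p$ and $q$ lie on the boundary axes. Here I would write the two conservation identities $\Phi_1(p_j)=\Phi_1(q_j)$ and $\Phi_2(p_j)=\Phi_2(q_j)$ and extract their asymptotics. Using the leading behaviour of $G$ and $K$ near $u=0$ (a logarithm when $g(0)\ne0$, a pole otherwise), these identities force the ratios $u(p_j)/u(q_j)$ and $v(p_j)/v(q_j)$ to converge to finite nonzero limits. Setting $P_j=\log\bigl(x(p_j)/x(q_j)\bigr)$ and $Q_j=\log\bigl(y(p_j)/y(q_j)\bigr)$, the two constraints read, to leading order, $nP_j+mQ_j\to c$ and $aP_j+bQ_j\to c'$ with $c,c'$ finite — and this is precisely where $am-bn=\pm1$ is used: the coefficient matrix has determinant $\pm1\ne0$, so $P_j$ and $Q_j$ are \emph{both} forced to converge. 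If $p$ and $q$ approached different axes we would have $\operatorname{Re}P_j\to-\infty$ or $\operatorname{Re}Q_j\to+\infty$, a contradiction; therefore $p$ and $q$ approach the same axis, and the now-bounded ratios pin the limiting data down so that $p$ and $q$ lie on one and the same boundary leaf (equivalently, agree on the boundary first integral modulo its period). This gives $(p,q)$ in the graph and closes it.

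I would carry the argument out in Case~1 ($ab\ne0$); Cases~2 and~3 ($a=0$ or $b=0$) are handled by the same scheme, the only change being that one axis is no longer a zero of $X$, so the roles of $\Phi_1$, $\Phi_2$ and of the divergence are adjusted accordingly — exactly as the leaf-space computations feeding Table~\ref{table: spaceofleaves} were reduced to Case~1. The single genuinely delicate step remains the boundary-versus-boundary case of the previous paragraph, where the invertibility coming from $am-bn=\pm1$ is what simultaneously excludes cross-axis gluing and forces the same-axis limits to coincide.
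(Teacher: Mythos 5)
Your strategy (closed graph of the leaf relation, stratification, and the two multivalued first integrals $\Phi_1=s-G(u)$, $\Phi_2=1/v+\epsilon t-K(u)$, which are indeed first integrals of $\overline{\mcf}$) is genuinely different from the paper's proof, and it can be made to work when $g(0)\neq 0$. But as written it has a real gap, and it sits exactly where you lean on ``divergence''. The identity $\Phi_1(p_j)=\Phi_1(q_j)$ is not an equality of single-valued functions: it means $s(q_j)-s(p_j)=\int_{u\circ\gamma_j}G'(u)\,du$ for a path $\gamma_j$ inside the leaf $L_j$, so the ``value'' $G(u(q_j))$ carries a term $2\pi i\,c_G N_j$, where $c_G=\mathrm{Res}_0\,G'$ and $N_j\in\Z$ is the winding of $u\circ\gamma_j$ around $0$ --- an integer determined by the (unknown) leaf and endpoints, and \emph{not a priori bounded} as $j\to\infty$. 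When $g(0)\neq 0$ the singularity of $G$ is purely logarithmic and this costs nothing: $|G(u(q_j))|\geq |c_G|\,\bigl|\log|u(q_j)|\bigr|-O(1)\to\infty$ on every branch, and conversely boundedness of $\Delta G$ bounds $N_j$, so your mixed case and your matrix argument go through. However, Theorem~\ref{FinalTheoremCommutingVF} is stated for \emph{every} holomorphic $g$, with no nondegeneracy hypothesis. If $g(0)=0$ and $\mathrm{Res}_0\bigl(1/(g(u)u)\bigr)\neq 0$ (e.g. $g(u)=u(1+u)$), then $G=P(1/u)+c_G\log u+O(1)$ with a genuine polar part $P$, and a large winding $N_j$ can in principle cancel $P(1/u(q_j))$; the inference ``$\Phi_1$ constant forces $s(q_j)\to\infty$'' is then a non sequitur, and the same ambiguity undermines the claim that boundedness of the identities ``forces the ratios $u(p_j)/u(q_j)$, $v(p_j)/v(q_j)$ to converge'' in the boundary-versus-boundary case. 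Closing this requires playing $\Phi_2$ against $\Phi_1$ (note $K$ has its own pole, of order $r$, tied to the inequality $b\geq mr$) and genuinely controlling the windings --- an argument the proposal does not contain. (Two smaller points: the equivalence ``closed graph $\Leftrightarrow$ Hausdorff quotient'' needs openness of the leaf-space projection, true for foliations but worth stating; and $\Phi_1$ does not even exist when $g\equiv 0$, a degenerate case your scheme must treat separately.)

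A related soft spot is your final step in the same-axis case: ``the now-bounded ratios pin the limiting data down so that $p$ and $q$ lie on one and the same boundary leaf'' is asserted, not derived. The leaves of $\overline{\mcf}$ inside $S_x$ satisfy their own multivalued relation (for $g(0)\neq0$, $s+\tfrac{1}{bm\,g(0)}\log x=\mathrm{const}$ modulo its period; for $g(0)=0$ they are parallel planes), and one must check that the limit of the ambient identities --- which involves the unknown limit of $|y(p_j)/y(q_j)|$ and of the windings --- reproduces exactly this relation modulo its period; this works, but it is a computation, not a tautology. It is instructive to compare with the paper: there all branch bookkeeping is sidestepped by normalizing $g(0)=1$, working with the single real quantity $\mathrm{Re}(s)$ --- on which the monodromy $\varphi_2$ acts trivially --- and separating leaves by saturating the half-spaces $\{\mathrm{Re}(s)<\mathrm{Re}(s_0)+1\}$ and $\{\mathrm{Re}(s)>\mathrm{Re}(s_0)+1\}$ (plus the slice $\{x=1\}$ with the auxiliary field $\overline{Z}$ to compare $S_x$ with $S_{x,y}$). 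In effect the paper only ever uses the real part of your conservation laws, which is exactly the part immune to the winding problem; your proof becomes correct once you restrict it to that real-part information (or otherwise bound $N_j$), but in the form proposed the $g(0)=0$ case fails.
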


Here it is interesting to notice that the open leaf of $\mcf$ on $U$, namely the set $U \setminus (\{x=0\} \cup \{y = 0\})$,
clearly intersects the boundary of $U$ transversely. Similarly, the coordinate axes also have transverse intersection
with the boundary of $U$. Naturally the coordinate axes are invariant by both $X$ and $Y$, irrespectively of whether
or not one (or both) of these vector fields vanish identically over the axes in question. The present situation therefore
contrasts with the example of non-Hausdorff leaf space provided by Proposition~\ref{prop_example}.

\begin{proof}[Proof of Theorem~\ref{FinalTheoremCommutingVF}]
There are a few different cases that need to be considered in the proof according to whether or not
$g(0) =0$ and $ab =0$. Let us first consider the case where $ab \ne 0$ and $g(0) \ne 0$. This is the most
representative situation in the sense that the corresponding discussion applies, with very minor modifications,
to the remaining cases.

Assume then $ab \neq 0$ and $g(0) \neq 0$. Since $g(0) \neq 0$ we can assume without loss of generality that
$g(0)=1$. In view of Proposition \ref{Prop_leaf_Sxy}, to prove the theorem, it suffices to check that the leaf
space associated with $S_{x,y}$ remains Hausdorff when the leaf space on $S_0 \cup S_x \cup S_y$ is added.
We are going to begin by looking to the leaves along $S_0 \cup S_x \cup S_y$. So, owing to
Table~\ref{table: spaceofleaves}, there follows that the leaf space of $\overline{\mcf}$ restricted to $S_y$
(resp. $S_x$) is a cylinder. In fact, this space is given by the quotient of $\C$ by the translation
$s \mapsto s + 2\pi i /am$ (resp. $s \mapsto s - 2\pi i /bm$). We need to check how this cylinder glues
together with the fiber above the origin.

\vspace{0.2cm}

\noindent {\it Claim}. The leaf space of $\overline{\mcf}$ associated with its restriction to
$S_0 \cup S_y$ (resp. $S_0 \cup S_x$) is a disc and, hence, Hausdorff.
In particular, the leaf space associated with the restriction of $\overline{\mcf}$ to
$S_0 \cup S_x \cup S_y$
is itself the union of two discs with a single transverse intersection. In particular this space is Hausdorff.
\vspace{0.2cm}

\noindent {\it Proof of the claim}. As previously said, the leaf space of $\overline{\mcf}$ associated with $S_y$ (resp.
$S_x$) is a cylinder. To prove that the leaf space of $\overline{\mcf}$ associated with $S_0 \cup S_y$
(resp. $S_0 \cup S_x$) is a disc it suffices to prove that the leaf space associated with this space can
be obtained by adjunction of a point to one of the ends of the cylinder in question. Clearly it suffices to deal
with the leaf space associated with $S_0 \cup S_y$ since the other case is analogous.

Let $L_1$ and $L_2$ be two distinct leaves over $S_0 \cup S_y$. First, we want to prove the existence of neighborhoods $U_1$ and
$U_2$ of $L_1$ and $L_2$, respectively, such that $U_1 \cap U_2 = \emptyset$. The existence of these neighborhoods immediately
follows in the case where both $L_1$ and $L_2$ are different from $S_0$. Let us then assume that $L_2$ coincides with $S_0$.

Consider the leaf $L_1$. This leaf intersect the fiber $\Sigma_{y_0}$ above $y_0$ for every $y_0 \in \C^{\ast}$. The connected
components of the intersection of $L_1$ with $\Sigma_{y_0}$ takes on the form $\{(t, s_1 + 2k\pi i/(am), y_0): t \in \R\, \,
k \in \Z \} \subseteq L_1$, for some fixed $s_1 \in \C$.

Next, note that the vector field $\partial /\partial s + amy \partial /\partial y$ is tangent to the leaves of $\overline{\mcf}$
contained $S_0 \cup S_y$ since it coincides with the restriction of $\overline{Y}$ to
$\{x=0\}$. Furthermore, the integral curves of this
vector field satisfy
\[
s = s_{y_0} + \frac{1}{am} \left( \ln y - \ln y_0 \right) \, ,
\]
where $(s_{y_0},y_0)$ corresponds to the initial condition of the associated differential equation. If $s = u+iv$, with
$u,v \in \R$ and $y = |y| e^{i\theta}$, the equation above implies that $u, \, v, \, |y|$ and $\theta$ satisfy
\begin{align}
u &= u_{y_0} + \frac{1}{am} \left(\ln |y| - \ln |y_0| \right) \label{eq_Re(s)} \, ,\\
v &= v_{y_0} + \frac{1}{am} \left(\theta - \theta_0 \right) .
\end{align}

Fix then a point $(t_0, s_0, y_0) \in L_1$ and recall that the points $(t,s,y) \in L_1$ satisfy Equation~(\ref{eq_Re(s)}),
where $u$ stands for the real part of $s$ (see Figure~\ref{Figure2}). Let us then consider the following two open sets on
$\Sigma_{y_0}$
\begin{align}\label{eq_neighborhoods}
V_1 &= \{(t,s) \in \C^2 : {\rm Re}(s) < {\rm Re}(s_0) + 1 \} \nonumber \\
V_2 &= \{(t,s) \in \C^2 : {\rm Re}(s) > {\rm Re}(s_0) + 1 \} \, .
\end{align}
Clearly $V_1 \cap V_2 = \emptyset$. In particular the
saturated sets of $V_1 \times \{y_0\}$ and of $V_2 \times \{y_0\}$ by $\overline{\mcf}$ have empty intersection
as well. Denoting respectively by $U_1$ and $U_2$ these saturated sets, there follows that:
\begin{itemize}
  \item $U_1$ is a neighborhood of $L_1$ in the corresponding leaf space;
  \item $U_2 \cup S_0$ is an open neighborhood of $L_2=S_0$ in the same leaf space (i.e. $U_2$ itself is a punctured
  neighborhood of $L_2=S_0$).
\end{itemize}
Since $U_1 \cap (U_2 \cup S_0) = \emptyset$, we conclude that the leaf space of $\overline{\mcf}$ restricted
to $S_0 \cup S_y$ (or, analogously, to $S_0 \cup S_x$) is Hausdorff.

\begin{figure}[hbtp]
\centering
\includegraphics[scale=0.8]{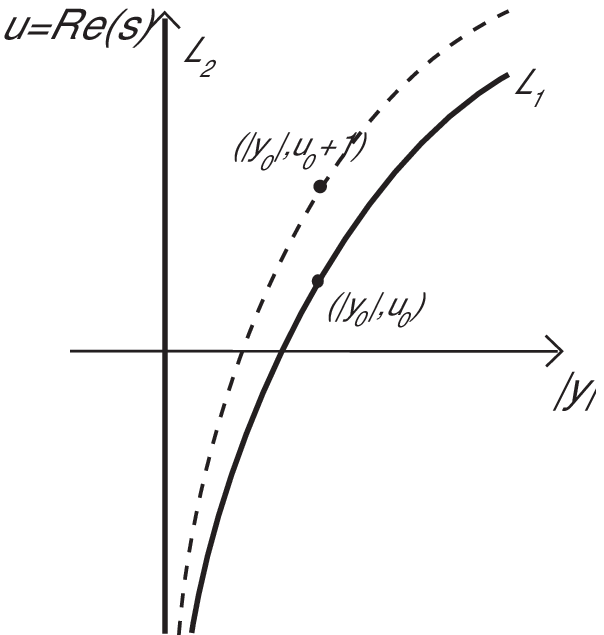}
\caption{}
\label{Figure2}
\end{figure}

To finish the proof of the claim, it only remains to check that the leaf $S_0$ corresponds to one of the ends of the
cylinder associated to the space of leaves of $S_y$. For this recall that the space of leaves associated
with $S_y$ is identified with
\[
\left\{ s= u +iv: u \in \R, \, v \in \left[0, \frac{2\pi}{am} \right] \right\} / \left( u+0i \sim u+2\pi i/(am) \right) \, .
\]
Fix $(s_0,y_0) \in \C \times \{y: (0,y) \in U\}$ with $y_0 \ne 0$ and consider the leaf $L$ of $\overline{\mcf}$ restricted to
$\{x=0\}$ passing through $(0,s_0,y_0)$. As previously shown, the real part of $s$ and the absolute value of $y$ over the mentioned
leaf satisfy
\[
{\rm Re}(s) = {\rm Re}(s_0) + \frac{1}{am} \left(\ln |y| - \ln |y_0| \right) \, .
\]
Furthermore, denoting by $U_1$ and $U_2$ the saturated of $V_1 \times \{y_0\}$ and $V_2 \times \{y_0\}$ with $V_1$ and $V_2$ as
in~(\ref{eq_neighborhoods}), we have that $U_1$ is a neighborhood of $L$ while $U_2 \cup S_0$ is a neighborhood
of $S_0$. Since, by construction, they are $\overline{\mcf}$-invariant, they induce neighborhoods separating
$L$ and $S_0$ in the corresponding leaf space. Finally, note that
all leaves passing through a point $(0,s, y_0)$ such that
the real part of $s$ is less than the real part of $s_0$ remain ``far from'' $S_0$. In turn, as far as we make the real part of
$s$ go to $+\infty$, the neighborhood $U_2 \cup S_0$ of $S_0$ separating $S_0$ from $L_1$ must be made smaller. In other words,
the point associated to $L$ in the leaf space of $\overline{\mcf}$ goes to the point associated to $S_0$. The point associated to
$S_0$ corresponds then to the ``upper end'' of the cylinder. The result follows.

Now it is clear from what precedes that the leaf space over $S_0 \cup S_y \cup S_y$ is the union of two discs
whose intersection is transverse and reduced to the origin. In particular it is Hausdorff. The claim is proved.
\qed

\smallbreak

To finish the proof of the theorem in the case $ab\neq 0$ and $g(0) \neq 0$, it remains to prove that the leaves
contained in the coordinate hyperplanes can be separated from leaves contained in $S_{x,y}$. Here recall that we have
normalized $Y$ to have $g(0) =1$. We are going to prove
that leaves contained in $S_x$ can be separated from leaves contained in $S_{x,y}$. In order to do that we are going
to restrict ourselves to the intersection of the foliation with the hyperplane $\{x=1\}$ (that is parallel to the
previously used hyperplane $\{x=0\} = S_0 \cup S_y$) and to consider the vector field defined as
\begin{eqnarray*}
\overline{Z} & = & g(x^ny^m) \left( -bm + x^ay^b f(x^ny^m) \right)\overline{X} - m x^a y^b \overline{Y} \medskip\\
& = & g(x^ny^m) \left(-bm + x^ay^bf(x^ny^m)\right)\frac{\partial}{\partial t} - m x^ay^b \frac{\partial}{\partial s}
- m x^{a}y^{b+1} g(x^ny^m)\frac{\partial}{\partial y}
\end{eqnarray*}
that is tangent to the foliation $\overline{\mcf}$ and leaves the hyperplane $\{x=1\}$ invariant. Note that every leaf
on $S_x$ intersects $\{x=1\}$ transversely and, in particular, $\Sigma_{(1,0)}$, the fiber above the point $(1,0)$. Also,
every leaf on $S_{x,y}$ intersects $\{x=1\}$ transversely as also $\Sigma_{(1,1)}$, the fiber above the point $(1,1)$.
The proof then follows by repeating the calculations we made for $\{x=0\} = S_0 \cup S_y$ with the vector field
$\overline{Z}$. Indeed, the effect of the monodromy of $\overline{\mcf}$ does not impact the argument since it acts only
on the imaginary part of $s$ (see Table~\ref{table: spaceofleaves})
while the construction of the mentioned foliated neighborhoods relies only on the real part
of~$s$. We have then proved that the entire leaf space of $\overline{\mcf}$ is Hausdorff.

Let us now consider the case where $ab \ne 0$ and $g(0)=0$. Let us check that the leaf space associated with $S_{x,y}$ remains
Hausdorff when the leaf space on $S_0 \cup S_x \cup S_y$ is added in this case. It becomes clear from Table~\ref{table: spaceofleaves}
that the leaf space associated with $S_0 \cup S_x \cup S_y$ is itself the union of two discs with a single transverse intersection
and hence Hausdorff. Indeed, the leaves of $\overline{\mcf}$ contained in $S_0 \cup S_x \cup S_y$ are all parallel to the
$(s,t)$-coordinate plane.

It remains to prove that leaves contained in $S_0 \cup S_x \cup S_y$ can be separated from leaves contained in $S_{x,y}$.
To prove that we can separate, for example, leaves on $S_x$ from leaves on $S_{x,y}$, we have just need to consider again
the intersection of the foliation $\overline{\mcf}$ with the hyperplane $\{x=1\}$ and the vector field $\overline{Z}$ previously
defined. Recall that $\overline{Z}$ is tangent to $\overline{\mcf}$ and leaves $\{x=1\}$ invariant. The difference from the
previous case is that $\Sigma_{(1,0)}$ is itself a leaf of the foliation. Apart from that, all the calculation can be made
as in the previous case.

Finally, the case where $ab=0$ can similarly be treated and will thus be left to the reader. The proof of the theorem is now
complete.
\end{proof}

\begin{proof}[Proof of Theorem~A]
The theorem follows at once from the combination of Proposition~\ref{Maximal-X-Y-TheoremA} and
Theorem~\ref{FinalTheoremCommutingVF}.
\end{proof}

\bigskip

\noindent {\bf Acknowledgements}.

We are very indebted to the referee whose
comments, suggestions, and critics led to corrections and significant improvements on the first version of this paper.
We are also grateful to A. Glutsyuk for his interest in our work.

The first author was partially supported by FCT (Portugal) through the sabbatical grant SFRH/BSAB/135549/2018
and through CMAT (UID/MAT/00013/2013). The second author was partially supported by CIMI - Labex Toulouse - through
the grant ``Dynamics of modular vector fields, Dwork family, and applications''.
The third author was partially supported by CMUP (UID/MAT/00144/2013), which is funded by FCT (Portugal) with national (MEC)
and European structural funds through the programs FEDER, under the partnership agreement PT2020. Finally all the
three authors benefited from CNRS (France) support through the PICS project ``Dynamics of Complex ODEs and Geometry''.

\bigskip

\bigskip

\begin{flushleft}
{\sc Ana Cristina Ferreira} \\
Centro de Matem\'atica da Universidade do Minho, \\
Campus de Gualtar, \\
4710-057 Braga, Portugal\\
anaferreira@math.uminho.pt \\

\end{flushleft}

\bigskip

\begin{flushleft}
{\sc Julio Rebelo} \\
Institut de Math\'ematiques de Toulouse ; UMR 5219\\
Universit\'e de Toulouse\\
118 Route de Narbonne\\
F-31062 Toulouse, FRANCE.\\
rebelo@math.univ-toulouse.fr

\end{flushleft}

\bigskip

\begin{flushleft}
{\sc Helena Reis} \\
Centro de Matem\'atica da Universidade do Porto, \\
Faculdade de Economia da Universidade do Porto, \\
Portugal\\
hreis@fep.up.pt \\

\end{flushleft}

\end{document}